\documentclass[11pt]{article}
\usepackage{latexsym}
\usepackage{graphicx}
\usepackage{latexsym}
\usepackage{amsfonts,amsmath,amssymb}
\usepackage{url}

\newcommand{\bpr}{\begin{prop}}
\newcommand{\epr}{\end{prop}}
\newcommand{\beq}{\begin{equation}}

\newcommand{\bona}{\mathrm{BoNa}}

\newcommand{\av}{\operatorname{Av}}

\newcommand{\qed}{\mbox{$\Diamond$}\vspace{\baselineskip}}

\newtheorem{theorem}{Theorem}[section]
\newtheorem{proposition}[theorem]{Proposition}
\newtheorem{lemma}[theorem]{Lemma}
\newtheorem{definition}[theorem]{Definition}
\newtheorem{corollary}[theorem]{Corollary}
\newtheorem{example}[theorem]{Example}

\newenvironment{proof}{\noindent {\bf Proof:}}{{\qed}}

\newcommand{\vanish}[1]{} 
\begin{document}
\title{Boolean-Narayana numbers}

\author{Mikl\'os B\'ona
\thanks{University of Florida, Gainesville FL 32611-8105.
Email: bona@ufl.edu.}
}

\date{}

\maketitle

\begin{abstract}  We introduce a refinement of Boolean-Catalan numbers and call them Boolean-Narayana numbers. We provide an explicit formula for these numbers,
and prove unimodality, log-concavity, and real-roots-only results for their sequences. We also prove a three-term recurrence relation for their generating polynomials.
\end{abstract}

\section{Introduction}
A {\em binary plane  tree} is a rooted plane tree on unlabeled vertices in which every non-leaf vertex has one or two children, and every child is a left child or a right child, even if it is the only child of its parent. It is well-known that the number of such trees on $n$ vertices is the $n$th Catalan number $C_n={2n\choose n}/(n+1)$. 
We will consider an enhanced version
of these trees. 
\begin{definition} A {\em 0-1-tree} is a binary plane tree in which every vertex that has two children is labeled 0 or 1. \end{definition}

Figure \ref{trees-0-1} shows the six 0-1-trees on three vertices. 
\begin{figure}[htb] \label{trees-0-1}
\begin{center}\includegraphics[width=6cm]{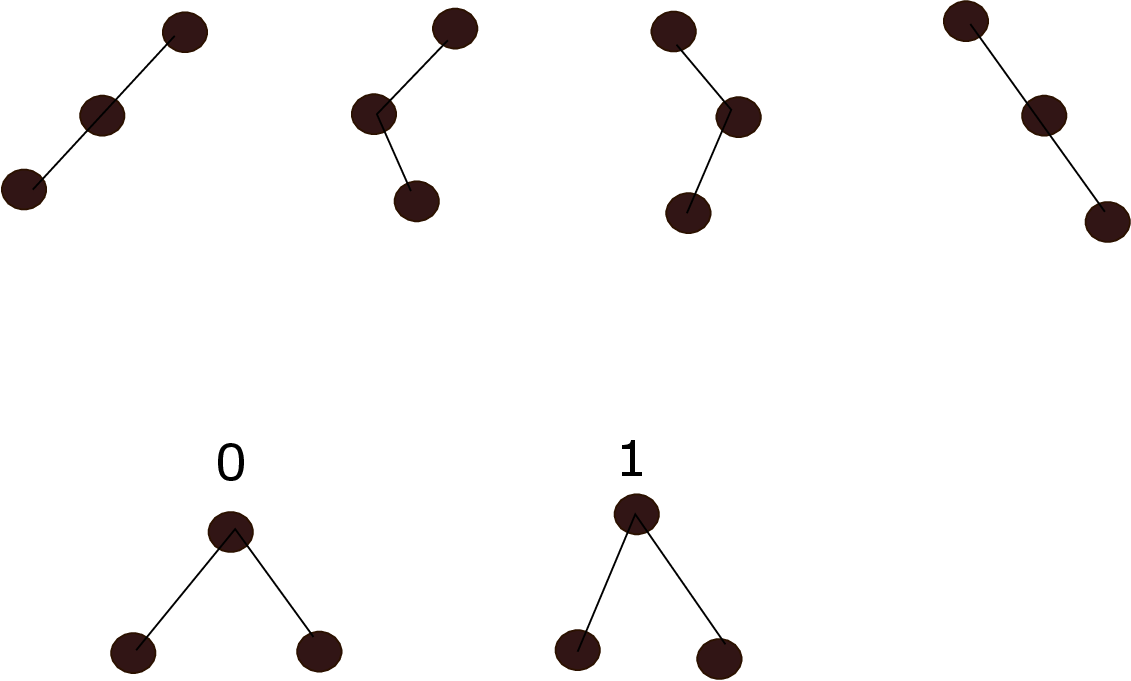}
\caption{The six 0-1 trees on three vertices.}
\end{center} \end{figure}

The number of 0-1-trees on $n$ vertices is denoted by $B(n)$ and is called a {\em Boolean-Catalan number} \cite{hossain}. The Boolean-Catalan numbers count various objects, some of which
are discussed in the thesis of Chetak Hossain \cite{hossain}, and some others appear in papers of Colin Defant \cite{aust,lothar,enum}, later to be shown to bijectively
correspond to 0-1-trees by the present author \cite{01trees}. The name "Boolean-Catalan numbers" can be justified, for example, by saying that binary plane trees are
counted by the Catalan numbers, and if a binary plane tree has $m$ vertices with two children, then we associate an element of the Boolean algebra $\{0,1\}^m$ to it.

In this paper, we take this concept further.  The {\em Narayana numbers} are the best-known refinement of the Catalan numbers. Accordingly, they have many equivalent
definitions. The one that is closest to our subject defines the Narayana number $N(n,k)$ as the number of binary plane trees on $n$ vertices with $k-1$ right edges. 
So we  define the {\em Boolean-Narayana numbers} $\bona(n,k)$ as the number of 0-1-trees on $n$ vertices that have $k-1$ right edges. 

In Section \ref{combinatorial}, we provide two additional combinatorial interpretations for the Boolean-Narayana numbers. In Section \ref{enumeration}, we use the Lagrange
Inversion Formula to prove an explicit formula for these numbers.  Then in Section \ref{unimodal-hor}, we construct an injection to prove that  for fixed $n$, the sequence of the numbers
$\bona(n,k)$, where $k=1,2,\cdots ,n$, is unimodal. We follow this by proving in Section \ref{realzeros} that the generating polynomial $\bona_n(u)$ of that sequence has real roots only, 
which implies the sequence of its coefficients is log-concave. We also prove that the polynomials $\bona_n(u)$ have weakly interlacing roots.  
 In Section \ref{rec}, we prove a three-term recurrence relation satisfied by
the polynomials $\bona_n(u)$. Then in Section \ref{Sturm} we use this recurrence relation to provide an alternative proof
of that interlacing property of these polynomials. 
 In Section \ref{logc-vert}, we fix $k$, and prove that the infinite sequence $\bona(n,k)$, where $n=k+1,k+2,\cdots $, is log-concave. 

That is, we will see that the Boolean-Narayana numbers share the best-known properties of the Narayana numbers. This is interesting, since the explicit formula we 
obtain for these numbers is not as compact as the well-known formula $N(n,k)=\frac{1}{n}{n\choose k}{n\choose k+1}$ for the latter.
We end by discussing future directions in Section \ref{further}, where we  also explain our original motivation for this family of questions. 

\section{A combinatorial interpretation}  \label{combinatorial}
{\em Stack sorting} or {\em West stack sorting} of permutations is an interesting and well-studied field of research. See Chapter 8 of \cite{combperm} for an introduction. 
The stack sorting operation $s$ has three natural,  equivalent definitions. Here we mention one of them. We define $s$ on all finite permutations $p$ {\em and their substrings} by setting $s(1)=1$ for the one-element permutation, and setting $s(p)=s(LnR)=s(L)s(R)n$, where $n$ is the maximum entry of the string $p$, while $L$ is the substring of entries on the left of $n$, and $R$ is the substring on the entries on the right of $n$. Note that $L$ and $R$ can be empty. 

The questions, methods and results about the stack sorting algorithm are intrinsically connected to the theory of pattern-avoiding permutations, a large and rapidly evolving field. 
 We say that a permutation $p$ {\em contains} the pattern (or subsequence) $q=q_1q_2\cdots q_k$ 
if there is a $k$-element set of indices $i_1<i_2< \cdots <i_k$ such that $p_{i_r} < p_{i_s} $ if and only
if $q_r<q_s$.  If $p$ does not contain $q$, then we say that $p$ {\em avoids} $q$. For example, $p=3752416$ contains
$q=2413$, as the first, second, fourth, and seventh entries of $p$ form the subsequence 3726, which is order-isomorphic
to $q=2413$.  A recent survey on permutation 
patterns by Vatter can be found in \cite{vatter}.

Recently, in a series of papers \cite{aust,lothar,enum} Colin Defant studied the size of $s^{-1}(Class_n)$, that is, he studied the number of permutations $p$ of length $n$
for which $s(p)$ belongs to a certain permutation class.  Let $\av_n(q)$ denote the set of permutations of length $n$ that avoid the pattern $q$. The classic result that the number
of permutations $p$ of length $n$ for which $s(p)$ is the identity permutation is the Catalan number $C_n$  is a special case of this line of work.  Namely, it is the result that
$|s^{-1}(\av_n(21))|=C_n$. Considering various permutation classes defined by sets of patterns of length three,  Defant has proved the chain
of equalities

\begin{eqnarray*}\sum_{n\geq 1} |s^{-1}(\av_n(132,312))| z^n & = &  \sum_{n\geq 1} |s^{-1}(\av_n(231,312))| z^n \\ = \sum_{n\geq 1} |s^{-1}(\av_n(132,231))|z^n 
& = & \frac{1-2z-\sqrt{1-4z-4z^2}}{4z} .\end{eqnarray*}

In \cite{01trees}, the present author showed the equality $\sum_{n\geq 1}B(n)z^n =  \frac{1-2z-\sqrt{1-4z-4z^2}}{4z}$, and then
he constructed a bijection (of a recursive nature) from the set   $s^{-1}(\av_n(231,312))$ of permutations to the set $\mathcal B_n$ of 0-1-trees on $n$ vertices, and another bijection from the 
set $s^{-1}(\av_n(132,312))$ of permutations to $\mathcal B_n$. In this section, we will use the ideas presented there to establish combinatorial interpretations of the
Boolean-Narayana numbers.  If $p=p_1p_2\cdots p_n$ is a permutation, then we say that $i$ is an {\em ascent} of $p$ if $p_i<p_{i+1}$. If $p_i > p_{i+1}$, then we say that 
$i$ is a {\em descent} of $p$. 

\begin{proposition} \label{first} The number of permutations in $s^{-1}(\av_n(231,312))$ that have exactly $k-1$ descents is $\bona(n,k)$. \end{proposition}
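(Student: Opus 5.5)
The plan is to compare a recursive decomposition of the permutations in $s^{-1}(\av_n(231,312))$ with the root decomposition of 0-1-trees, and check that descents correspond to right edges. On the tree side, let $T(z,u)=\sum_{n,k}\bona(n,k)z^nu^{k-1}$. Decomposing at the root gives four cases: a single vertex; a root with only a left child; a root with only a right child; and a root with two children and label $0$ or $1$. This yields
\[ T = z\left(1+T+uT+2uT^2\right). \]
Here the factor $u$ records the right edge leaving the root. So the proposition reduces to showing that the descent generating function of the permutations satisfies the same functional equation, or, better, to refining the recursive bijection of \cite{01trees} so that it sends descents to right edges.

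On the permutation side, note that $\av_n(231,312)$ consists of the layered permutations: increasing sequences of decreasing blocks of consecutive values. Write $p=LnR$, so $s(p)=s(L)s(R)n$. Since $n$ is last in $s(p)$, its layer is $\{n\}$, so the condition is that $s(L)s(R)$ is layered. I will split into cases. If $L$ or $R$ is empty, the condition is simply that the nonempty part is (order-isomorphic to) an element of $s^{-1}(\av(231,312))$; this matches the one-child cases. If both are nonempty, then $s(L)$ and $s(R)$ must both be layered. Moreover, the junction between them must either be an ``ascent junction'' (every entry of $L$ is smaller than every entry of $R$) or a ``merging junction'' (the last layer of $s(L)$ and the first layer of $s(R)$ fuse into one decreasing block of consecutive values). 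I expect to show, as in \cite{01trees}, that for each pair of patterns of $L$ and $R$ there are exactly two admissible ways to distribute the values. These two ways correspond to the labels $0$ and $1$, giving the factor $2T^2$. This is where I would lean directly on the bijection of \cite{01trees} rather than re-derive it: I would take its recursive definition as given, with $L$ mapped to the left subtree and $R$ to the right subtree.

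The descent statistic then follows easily. The entry $n$ creates an ascent with the last entry of $L$ (if $L\neq\emptyset$) and a descent with the first entry of $R$ (if $R\neq\emptyset$). Descents inside $L$ and inside $R$ depend only on their patterns, not on which values are assigned to them. Therefore
\[ \mathrm{des}(p)=\mathrm{des}(L)+\mathrm{des}(R)+[R\neq\emptyset], \]
which is exactly the recursion for the number of right edges under the correspondence $L\mapsto$ left subtree, $R\mapsto$ right subtree. Induction on $n$ then shows that the bijection maps permutations with $k-1$ descents to 0-1-trees with $k-1$ right edges.

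The main obstacle is the two-nonempty-sides case. There I must verify carefully that the bijection of \cite{01trees} really decomposes as $p=LnR$, with the left and right subtrees coming from the patterns of $L$ and $R$, and does not do something like reversing roles or altering the patterns of $L$ and $R$. If it does, I would instead prove directly that exactly two value-distributions are valid, each preserving the patterns of $L$ and $R$, and then conclude via the generating-function identity above.
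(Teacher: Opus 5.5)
Your proposal is correct and is essentially the paper's proof. The paper writes $p=LnR$ and uses the fact, cited from the earlier 0-1-tree paper, that for nonempty $L,R$ exactly two value sets work. It then notes $\mathrm{des}(p)=\mathrm{des}(L)+\mathrm{des}(R)+[R\neq\emptyset]$ and compares the resulting recurrence $A(n,k)=A(n-1,k)+A(n-1,k-1)+2\sum_{i=2}^{n-1}\sum_{j}A(i-1,j)A(n-i,k-j)$ with the identical root-decomposition recurrence for $\bona(n,k)$; this is just the coefficientwise form of your functional equation.

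So you do not need to know how the bijection from that paper acts internally. Your fallback counting argument is exactly the paper's route, and your ascent-versus-merging junction analysis of layered permutations even supplies the ``exactly two'' fact that the paper only cites.
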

\begin{proof} Let $p=LnR$ be a permutation of length $n$. For $s(p)$ to avoid both 231 and 312, it is necessary that both $L$ and $R$ avoid those two patterns. 
If $L$ and $R$ are both nonempty, then it is proved in \cite{01trees} that regardless of what $L$ and $R$ are, there are exactly two possible choices for the underlying set
of $L$ (the underlying set of $R$ is determined by these choices) so that $s(p)\in \av_n(231,312)$. If $L$ (resp. $R$) is empty, then the sufficient and necessary condition for 
$s(p)\in \av_n(231,312)$ is that $s(R)$ (resp $s(L)$) avoids both 231 and 312.  

Let $A(n,k)$ be the number of permutations in $s^{-1}(\av_n(231,312))$ that have exactly $k-1$ descents. Then $A(1,1)=1$, and it follows from the analysis in the previous paragraph that
\begin{equation} \label{recurrence} A(n,k) = A(n-1,k)+A(n-1,k-1) +2 \sum_{i=2}^{n-1} \sum_{j\geq 0} A(i-1,j)A(n-i,k-j) .\end{equation}
Indeed, on the right-hand side, the first summand counts permutations in which $n$ is in the last position, the second summand counts those in which $n$ is in the 
first position, and the third summand counts those where $n$ is in the middle and counts them according to the number $j-1$ of descents in $L$. 

On the other hand, $\bona(1,1)=1$, and  the numbers $\bona(n,k)$ obviously satisfy the analogous recurrence relation 
\begin{equation} \label{rbecurrence} \bona(n,k) = \bona(n-1,k)+\bona(n-1,k-1) +2 \sum_{i=2}^{n-1} \sum_{j\geq 0} \bona(i-1,j)\bona(n-i,k-j) ,\end{equation}
since the number of right edges in a 0-1 tree is easy to count by considering subtrees of the root.
This proves our claim.
\end{proof}

\begin{proposition} \label{second} The number of permutations in $s^{-1}(\av_n(132,312))$ that have exactly $k-1$ descents is $\bona(n,k)$. \end{proposition}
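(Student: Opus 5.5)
The plan is to follow the proof of Proposition \ref{first}: show that the numbers $A'(n,k)$ of permutations in $s^{-1}(\av_n(132,312))$ with exactly $k-1$ descents satisfy $A'(1,1)=1$ and the recurrence (\ref{recurrence}). Since $\bona(n,k)$ satisfies the identical recurrence (\ref{rbecurrence}) with the same initial value, induction on $n$ then gives $A'(n,k)=\bona(n,k)$.

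Write $p=LnR$, so that $s(p)=s(L)s(R)n$. First I will note that the final entry $n$ cannot take part in an occurrence of $132$ or $312$, since in both patterns the last entry is not the largest. Hence $s(p)\in\av_n(132,312)$ if and only if $s(L)s(R)$ avoids both patterns. In particular $s(L)$ and $s(R)$ must each avoid $132$ and $312$.

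This gives three cases according to the position of $n$.
\begin{itemize}
\item If $R$ is empty, then $p=Ln$ and the condition is exactly that $s(L)$ avoids $132$ and $312$. The descents of $p$ are those of $L$. This case contributes $A'(n-1,k)$.
\item If $L$ is empty, then $p=nR$ and the condition is that $s(R)$ avoids both patterns. Now $p$ has one more descent than $R$, namely at position $1$. This case contributes $A'(n-1,k-1)$.
\item If both $L$ and $R$ are nonempty, say $|L|=i-1$ with $2\le i\le n-1$, then $p$ has an ascent into $n$ and a descent out of $n$. So $\mathrm{des}(p)=\mathrm{des}(L)+\mathrm{des}(R)+1$. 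If $L$ has $j-1$ descents and $R$ has $k-j-1$ descents, then $p$ has $k-1$ descents, which matches the convolution term of (\ref{recurrence}).
\end{itemize}

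The third case needs the following fact, taken from the analysis in \cite{01trees} used to build the bijection from $s^{-1}(\av_n(132,312))$ to $\mathcal B_n$. Fix the patterns of $L$ and $R$, each of which is a permutation whose image under $s$ avoids $132$ and $312$. Then exactly two choices of the underlying set of $L$ make $s(L)s(R)$ avoid $132$ and $312$, and the set of $R$ is then the complement. Since the number of descents of $L$ and of $R$ depends only on their patterns and not on their underlying sets, this case contributes $2\sum_{i=2}^{n-1}\sum_{j} A'(i-1,j)A'(n-i,k-j)$, as required.

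The main obstacle is this ``exactly two choices'' fact for $\av(132,312)$, if \cite{01trees} is not simply cited. To verify it directly, I would use the characterization that a permutation avoids $132$ and $312$ exactly when every entry is either the minimum or the maximum of the entries from that position to the end. Equivalently, reading from the right, each new entry is a new maximum or a new minimum. With this, consider the concatenation $s(L)s(R)$ where $s(L)$ and $s(R)$ both avoid the two patterns.
\begin{itemize}
\item The suffix $s(R)$ imposes no condition beyond its own avoidance.
\item Every entry of $s(L)$ must be larger than all of $s(R)$ or smaller than all of $s(R)$, so $s(R)$ must occupy an interval of values.
\item Compatibility then forces the underlying set of $L$ to be either an initial segment or a final segment of $\{1,\dots,n-1\}$. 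Both choices work, and they are distinct because both parts are nonempty.
\end{itemize}
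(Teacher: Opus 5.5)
Your reduction is the same as the paper's proof, and with the citation to \cite{01trees} it is complete. You split $p=LnR$, note that the final $n$ of $s(p)=s(L)s(R)n$ cannot lie in a $132$ or $312$, count descents in the three cases, and cite the two-choices fact from \cite{01trees} to obtain recurrence (\ref{recurrence}). The direct verification you sketch for that fact, however, is wrong and cannot replace the citation.

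Your characterization (each entry is the minimum or maximum of the entries from its position to the end) describes $\av(213,231)$, not $\av(132,312)$. The permutation $132$ satisfies it, while $213$ avoids both patterns but fails it. In both $132$ and $312$ it is the \emph{last} entry of an occurrence that has the middle value. So the correct statement is that $w$ avoids $132$ and $312$ if and only if every $w_k$ is larger than all earlier entries or smaller than all earlier entries; equivalently, every prefix of $w$ occupies an interval of values. Your conclusion that the set of $L$ is an initial or final segment of $\{1,\dots,n-1\}$ is also false. For $n=4$, $p=2413$ gives $s(p)=2134\in\av_4(132,312)$ with $L$ having set $\{2\}$. On the other hand, $p=3412$, whose $L$ has the final-segment set $\{3\}$, gives $s(p)=3124$, which contains $312$.

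A correct direct argument runs the other way. Since $s(L)$ is a prefix of $s(L)s(R)$, the set of $L$ must be an interval $[a,a+i-2]$. Each entry of $s(R)$ must then extend the current interval by one value, at the bottom or at the top. Equivalently, the entries of $R$ below $a$ appear in decreasing order in $s(R)$, and those above $a+i-2$ appear in increasing order.

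Let $c$ be the first entry of the standardization of $s(R)$. Because $s(R)$ avoids both patterns, after $c$ the smaller values appear decreasingly and the larger ones increasingly. Hence the number $a-1$ of entries of $R$ lying below the set of $L$ must be $c-1$ or $c$. Any other value would place $c$ together with $c-1$ or $c+1$ in the same group but in the wrong relative order. Both values work, so there are exactly two admissible sets for $L$ for every pair of patterns, which is what the recurrence needs.
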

\begin{proof} Analogous to the proof of Proposition \ref{first}, with the needed statement on the underlying sets of $L$ and $R$ proved in \cite{01trees}.
\end{proof}

Note that  it follows by reflection through a vertical axis that $\bona(n,k)=\bona(n,n-k+1)$. So Propositions \ref{first} and \ref{second} yield the following.

\begin{corollary}  \label{symmetry} For all $n$ and $k$, the equality $A(n,k)=A(n,n-k+1)$ holds. That is, 
the number of permutations in  $s^{-1}(\av_n(231,312))$ with $k-1$ ascents is equal to the number of permutations in  $s^{-1}(\av_n(231,312))$ with 
$k-1$ descents. The same holds for $s^{-1}(\av_n(132,312))$.
\end{corollary}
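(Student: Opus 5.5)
The plan is to read Corollary \ref{symmetry} off the chain $A(n,k)=\bona(n,k)=\bona(n,n-k+1)=A(n,n-k+1)$. The two outer equalities are Proposition \ref{first}; for $s^{-1}(\av_n(132,312))$ we use Proposition \ref{second} instead. So the only real work is the middle equality, $\bona(n,k)=\bona(n,n-k+1)$.

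For the middle equality we use reflection through a vertical axis, as the paper notes. Given a 0-1-tree $T$, let $T'$ be its mirror image: every left child becomes a right child and vice versa, and each vertex with two children keeps its 0/1 label. This is an involution on $\mathcal B_n$, so it is a bijection.

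The key count is that a tree on $n$ vertices has exactly $n-1$ edges, each either a left edge or a right edge. If $T$ has $k-1$ right edges, it therefore has $n-k$ left edges. Then $T'$ has $n-k=(n-k+1)-1$ right edges, which gives $\bona(n,k)=\bona(n,n-k+1)$.

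It remains to restate the result in terms of ascents. A permutation of length $n$ has $n-1$ adjacent positions, each an ascent or a descent. So having $k-1$ ascents is the same as having $n-k$ descents, and the number of permutations in the set with $k-1$ ascents is $A(n,n-k+1)$. Combined with the chain above, this equals $A(n,k)$, the number with $k-1$ descents.

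There is no real obstacle here. The only point to check is that mirroring preserves the class of 0-1-trees, and it does: a vertex has two children in $T$ exactly when it has two children in $T'$, so the labels transfer unchanged.
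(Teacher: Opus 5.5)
Your proposal is correct and follows the paper's own argument: the chain $A(n,k)=\bona(n,k)=\bona(n,n-k+1)=A(n,n-k+1)$, using Proposition \ref{first} (or Proposition \ref{second} for the other class) together with mirror reflection of 0-1-trees. You also fill in details the paper leaves implicit: the mirror image keeps the labels and exchanges the $k-1$ right edges with the $n-k$ left edges, and having $k-1$ ascents is the same as having $n-k$ descents.
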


\section{Enumeration}  \label{enumeration}
In this section, we will use the Lagrange Inversion Formula to find an explicit formula for $\bona(n,k)$.  The Lagrange Inversion Formula, in its simplest form, 
can be found in many graduate-level textbooks. However, in this section, we will use its univariate version on a function that has two variables, and for that reason, 
we will explain which version we use and how. 

The Lagrange Inversion Formula, in one of its univariate versions, is the following. 
\begin{theorem} \label{lif}  Let 
$F(z)$ be the unique formal power series with
$F(0)=0$ satisfying 
\begin{equation} \label{lif-def} F(z)=z\phi(F(z),\end{equation}
where $\phi(y)$ is a formal power series with $\phi(0)\neq 0$. 
Then for all positive integers $n$, the equality
\[[z^n]F(z)=\frac{1}{n}[y^{n-1}\phi(y)^n] \] holds.
\end{theorem}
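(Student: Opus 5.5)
The plan is the classical residue argument in the ring of formal power series. Since $\phi(0)\neq 0$, the series $F(z)=z+\cdots$ up to the nonzero constant $\phi(0)$, so $F$ has a compositional inverse. Setting $y=F(z)$, equation (\ref{lif-def}) gives $z=y/\phi(y)$. Hence the inverse series is $G(y)=y/\phi(y)$, and this is a legitimate formal power series with $G(0)=0$ and $G'(0)=1/\phi(0)\neq 0$. Uniqueness of $F$ follows from the same observation: the coefficients of $F$ are determined recursively by comparing coefficients of $z^m$ on both sides of (\ref{lif-def}), since the right-hand side only involves lower coefficients of $F$.

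Next I will work with formal Laurent series and the residue functional $\operatorname{res}$, defined as the coefficient of $z^{-1}$. Two standard facts will be used.
\begin{itemize}
\item For any formal Laurent series $H$, $\operatorname{res}\,H'=0$.
\item For a series $G$ with $G(0)=0$, $G'(0)\neq 0$, and any Laurent series $H$, we have $\operatorname{res}_y\, H(G(y))G'(y)=\operatorname{res}_z H(z)$.
\end{itemize}
The second fact is the change-of-variables rule. It is checked on monomials $H=z^m$. For $m\neq -1$, the term $G^mG'$ is a derivative $(G^{m+1}/(m+1))'$, so its residue is zero. For $m=-1$, write $G'/G=1/y+(\text{power series})$ using $G=y\cdot u(y)$ with $u(0)\neq 0$; its residue is then $1$.

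Now compute. For $n\ge 1$,
\[ n[z^n]F(z)=[z^{n-1}]F'(z)=\operatorname{res}_z\, F'(z)z^{-n}. \]
Integrating by parts, using $\operatorname{res}(F z^{-n})'=0$, this equals $n\operatorname{res}_z F(z)z^{-n-1}$. Equivalently, $[z^n]F=\operatorname{res}_z F(z)z^{-n-1}$, which is trivial. So the useful move is to substitute $z=G(y)$ directly in the first form:
\[ \operatorname{res}_z F'(z)z^{-n}=\operatorname{res}_y F'(G(y))G(y)^{-n}G'(y)=\operatorname{res}_y G(y)^{-n}, \]
since $F'(G(y))G'(y)=1$ by differentiating $F(G(y))=y$. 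Finally, $G(y)^{-n}=\phi(y)^n y^{-n}$, so $\operatorname{res}_y G^{-n}=[y^{n-1}]\phi(y)^n$. Dividing by $n$ gives the claim.

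The main obstacle is justifying the change-of-variables lemma rigorously in the formal setting. In particular, one must check that $H(G(y))$ makes sense for Laurent $H$. This is the step where $G=y\,u(y)$ with $u$ invertible is needed, so that $G^{-1}$ is a Laurent series. The residue computation for $m=-1$ is then the only genuinely nontrivial check.
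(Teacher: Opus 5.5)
The paper does not prove this statement. It quotes the Lagrange Inversion Formula as a standard textbook result, and only remarks on how to apply it with $u$ as a parameter. There is also a typo in the statement: the intended right-hand side is $\frac{1}{n}[y^{n-1}]\phi(y)^n$, which is how you read it.

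Your residue argument is the standard textbook proof, and it is correct. The chain $n[z^n]F=\operatorname{res}_z F'(z)z^{-n}$, then the substitution $z=G(y)$ with $G(y)=y/\phi(y)$, then $F'(G(y))G'(y)=1$, then $G(y)^{-n}=\phi(y)^n y^{-n}$, gives the formula.

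Two points deserve a sentence each in a write-up:
\begin{itemize}
\item What you derive from $F=z\phi(F)$ is $G(F(z))=z$, but the computation uses $F(G(y))=y$. This is fine, because for series of order exactly one a left compositional inverse is also a right inverse; say so explicitly.
\item You verify the change-of-variables rule only on monomials, but you apply it to $H(z)=F'(z)z^{-n}$, which is an infinite sum. The extension holds because $G^mG'$ has order exactly $m$. Hence $\sum_m h_m G^mG'$ converges formally, and each of its coefficients, in particular the residue, depends on only finitely many terms.
\end{itemize}
The integration-by-parts aside leads nowhere, as you note yourself, and can be deleted.

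Compared with the paper's citation, your argument is self-contained. It also works verbatim over any coefficient ring containing $\mathbb{Q}$ in which $\phi(0)$ is a unit. Taking the coefficient ring $\mathbb{Q}[u]$, where $\phi(0,u)=1$, this is exactly the justification for the bivariate application in the enumeration section, which the paper only sketches by saying ``treat $u$ as a parameter.''
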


Note that the name "inversion formula" is justified because (\ref{lif-def}) is equivalent to the fact that $F$, as a function of $z$,  is the {\em compositional inverse} of
the function $z/ \phi(z)$. 

Now let us assume that we have a {\em bivariate} power series $F(z,u)$ that is defined implicitly by the equation
\begin{equation} \label{bivariate} F(z,u)=z\phi(F(z,u),u), \end{equation} where $\phi(0,u)\neq 0$.  
Then we can treat $u$ as a parameter and apply the univariate formula (\ref{lif-def}) in the variable $z$, since (\ref{bivariate}) is equivalent to stating that the compositional 
inverse of $F(z,u)$, in the variable $z$, is equal to $z/\phi(F(z,u),u)$.

 As $\bona(n,k)=\bona(n,n-k+1)$,
it suffices to compute $\bona(n,k)$ for the case when $k\leq (n+1)/2$.

\begin{theorem}  \label{explicit}
Let $k\leq (n+1)/2$, where $n$ and $k$ are positive integers. Then
\[\bona(n,k)=\frac{1}{n} {n\choose k-1} \sum_{j=0}^{k-1} 2^j {k-1\choose j}{n-k+1\choose j+1} .\]
\end{theorem}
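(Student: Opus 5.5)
We set up a functional equation for the bivariate generating function and then apply Lagrange inversion in the form (\ref{bivariate}). Let $T(z,u)=\sum_{n,k}\bona(n,k)z^nu^{k-1}$, so $u$ marks right edges. Decompose a 0-1-tree at its root. The root alone contributes $z$. A root with only a left child contributes $zT$. A root with only a right child contributes $zuT$, since the edge to that child is a right edge. A root with two children carries one of two labels and contributes $2zuT^2$. Hence
\[T=z\left(1+(1+u)T+2uT^2\right),\]
which is the generating-function form of recurrence (\ref{rbecurrence}). This is exactly (\ref{bivariate}) with $\phi(y,u)=1+(1+u)y+2uy^2$, and $\phi(0,u)=1\neq 0$. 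Treating $u$ as a parameter, Theorem \ref{lif} gives
\[[z^n]T=\frac1n[y^{n-1}]\left(1+(1+u)y+2uy^2\right)^n.\]

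Next we extract the coefficient of $u^{k-1}$. The key step is to choose a factorization of $\phi$ that produces the stated single sum with powers of $2$. We write $\phi=(1+y)(1+uy)+uy^2$. An alternative is $\phi=(1+y)(1+uy)\bigl(1+\frac{uy^2}{(1+y)(1+uy)}\bigr)$, but it is simpler to expand directly. Apply the binomial theorem to $\bigl((1+y)(1+uy)+uy^2\bigr)^n$, choosing $j$ factors equal to $uy^2$:
\[\phi^n=\sum_j \binom nj u^jy^{2j}(1+y)^{n-j}(1+uy)^{n-j}.\]
The coefficient of $u^{k-1}y^{n-1}$ is then
\[\sum_j\binom nj\binom{n-j}{k-1-j}\binom{n-j}{n-k-j}.\]
This sum has no powers of $2$, so it must be transformed. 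The cleaner route is to expand in the two-labeled structure directly. Write $\phi=1+y+uy(1+2y)$, so that
\[\phi^n=\sum_m\binom nm u^my^m(1+2y)^m(1+y)^{n-m}.\]
Taking $m=k-1$, we need
\[[y^{n-k}](1+2y)^{k-1}(1+y)^{n-k+1}=\sum_j 2^j\binom{k-1}{j}\binom{n-k+1}{n-k-j}.\]
Since $\binom{n-k+1}{n-k-j}=\binom{n-k+1}{j+1}$, multiplying by $\frac1n\binom{n}{k-1}$ gives exactly the formula of Theorem \ref{explicit}.

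We expect the main obstacle to be finding the expansion of $\phi$ that produces the $2^j$ form directly. The split $\phi=(1+y)+uy(1+2y)$, which separates the $u$-dependence linearly, achieves this. The remaining steps are routine bookkeeping: checking that the ranges of $j$ match, with $j\le k-1$ and $j+1\le n-k+1$, and noting that the hypothesis $k\le (n+1)/2$ is not actually needed for the identity. That hypothesis only reflects the symmetry $\bona(n,k)=\bona(n,n-k+1)$ remarked upon before the theorem.
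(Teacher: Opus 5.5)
Your proof is correct and is essentially the paper's argument. It uses the same functional equation $T=z(1+(1+u)T+2uT^2)$, the same Lagrange inversion in $z$ with $u$ as a parameter, and the same split $\phi=(1+y)+uy(1+2y)$ to extract $[u^{k-1}y^{n-1}]\phi^n$; your detour through $(1+y)(1+uy)+uy^2$ yields a valid alternative sum but is unnecessary. Your remark that the hypothesis $k\le (n+1)/2$ is not needed is also right: ${n-k+1\choose j+1}=0$ for $j>n-k$, whereas the paper invokes the hypothesis only to replace the upper limit $\min(k-1,n-k)$ by $k-1$.
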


\begin{proof} Let $t(n,k)=\bona(n,k+1)$ be the number of 0-1 trees on $n$ vertices with $k$ right edges, and let
\begin{equation} \label{bivariate-tree} T(z,u)=\sum_{n\geq 1,k\geq 0} t(n,k) z^n u^k = \sum_{n\geq 1,k\geq 1} \bona(n,k)z^n u^{k-1}. \end{equation} 
Removing the root of a 0-1 tree we either get the empty set, or one tree that was connected to the root by a left edge, or one tree that was connected to the
root by a right edge, or two trees, one of which was connected to the root by a left edge.  This implies the functional equation
\[T=z(1+uT+T+2uT^2).\]
In order to use the Lagrange Inversion Formula, we set 
\begin{equation} \label{defofphi} \phi(y)=\phi(y,u)=1+(u+1)y+2uy^2,\end{equation} and then (\ref{defofphi}) becomes
\begin{equation} \label{lagrange-first} T=z\phi(T)) .\end{equation} 

By Theorem \ref{lif}, 
\begin{equation} \label{lagrange-second} [z^nu^{k-1}] T(z,u)=\frac{1}{n}[y^{n-1}u^{k-1}] \phi(y)^n.\end{equation}

Now notice that 
\[\phi(y)=(1+y)+u(y+2y^2)=(1+y)+uy(1+2y) .\]
Therefore, for $\phi(y)^n$ on the right-hand side of (\ref{lagrange-second}), we take the power $((1+y)+uy(1+2y))^n$, and notice
that the term $u^{k-1}$ will come from the summand of that binomial expansion in which the $u$-term is taken to the $(k-1)$st power. 
That is, 
\[[u^{k-1}]\phi(y)^n = {n\choose k-1} (1+y)^{n-k+1}y(1+2y))^{k-1} .\] 
Recalling (\ref{lagrange-second}), this means
\begin{eqnarray*} \bona(n,k) & = & [z^nu^{k-1}]T(z,u)=\frac{1}{n} {n\choose k-1}[y^{n-1} ]y^{k-1}\cdot(1+2y)^{k-1} (1+y)^{n-k+1} \\
& = & [z^nu^{k-1}]T(z,u)=\frac{1}{n} {n\choose k-1}[y^{n-k} ](1+2y)^{k-1} (1+y)^{n-k+1}.\end{eqnarray*}
Finally, expanding the binomial sums we get the expression
\[\bona(n,k)=\frac{1}{n} {n\choose k-1} \sum_{j=0}^{\min(k-1,n-k)} 2^j {k-1\choose j}{n-k+1\choose j+1} ,\]
However, recall that we assumed that $k\leq (n+1)/2$, (which, for symmetry reasons, does not result in loss of generality), so 
$k-1\leq n-k$, and our theorem is proved.  
\end{proof}

The numbers $\bona(n,k)$ for $1\leq n\leq 5$, and $k=1,2,\cdots ,n$ are shown below. Note the interesting fact that $\bona(n,2)=(n-1)^2$.

\begin{itemize}
\item 1
\item 1, 1
\item 1, 4, 1
\item 1, 9, 9, 1
\item 1, 16, 38, 16, 1.
\end{itemize}

\section{A Combinatorial Proof for Unimodality for fixed $n$} \label{unimodal-hor}
We say that sequence $a_1,\cdots ,a_n$ is {\em unimodal} if there exists an index $k$ so that $a_1\leq a_2\leq \cdots a_k\geq a_{k+1} \geq \cdots \geq a_n$.
In this section we present a combinatorial proof of the following. 

\begin{theorem} \label{unimodaltheorem}
For all fixed $n$, the sequence $\bona(n,k)_{1\leq k \leq n}$ is
unimodal. 
\end{theorem}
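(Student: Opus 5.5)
By the symmetry $\bona(n,k)=\bona(n,n-k+1)$ (reflection through a vertical axis), it suffices to show that $\bona(n,k-1)\leq \bona(n,k)$ whenever $k\leq (n+1)/2$, that is, whenever $k-2 < n-k+1$. In that range, trees counted by $\bona(n,k-1)$ have $k-2$ right edges and at least $k-1$ left edges. The plan is to construct an injection $f$ from 0-1-trees on $n$ vertices with $k-2$ right edges into 0-1-trees on $n$ vertices with $k-1$ right edges. The natural tool is to turn exactly one left edge into a right edge while keeping the 0-1 labels meaningful.

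First I would set up the classical Narayana-level injection. Encode a binary plane tree by its edges, and consider the set $E$ of left edges. Flipping a left edge from parent $v$ to child $w$ is legal only when $v$ has no right child; otherwise $v$ would acquire two right children. So I would restrict to left edges whose parent has a single child. These are \emph{unary left edges}, and flipping them touches no 0-1 label, since unary vertices carry no label. The number of unary edges is $n-1-2m$, where $m$ is the number of binary vertices; this does not depend on the left/right choices. For a fixed shape, meaning a fixed underlying plane structure with binary vertices labeled, the unary edges can be oriented independently. Since binary vertices always contribute exactly one left and one right edge, the trees with that shape are counted by a shifted binomial coefficient ${n-1-2m\choose i}$.

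This suggests the cleanest approach. Group 0-1-trees into classes determined by forgetting the orientation of unary edges, keeping the structure of binary vertices and their labels. Each class with $m$ binary vertices contributes ${N\choose i}$ trees with $m+i$ right edges, where $N=n-1-2m$. Binomial rows are symmetric and unimodal about $N/2$, which corresponds to $m+N/2=(n-1)/2$ right edges for every class. So every class contributes a sequence in $k$ that is unimodal with the same center, $(n+1)/2$ in terms of $k$. A sum of unimodal sequences that are symmetric about a common center is again symmetric and unimodal.

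To make this an explicit injection, as the section title promises, within each class I would use a standard injection between $i$-subsets and $(i+1)$-subsets of an $N$-set for $i<N/2$. An example is the bracket or parenthesis matching injection: read the unary edges in preorder, write right as ``('' and left as ``)'', and flip the first unmatched left edge. Composing over classes gives $f$.

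The main obstacle is making the class decomposition rigorous. I must check that erasing orientations of unary edges is well defined and that every reorientation of unary edges yields a valid 0-1-tree in the same class. This holds because each unary vertex's single child may be placed left or right freely, and labels live only on binary vertices. The right-edge count is then $m+i$. The rest is the binomial unimodality, which is standard.
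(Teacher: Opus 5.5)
Your argument is correct, and it takes a genuinely different route from the paper. The paper constructs one global injection $z$ from trees with $k-1$ right edges to trees with $k$ right edges, for $k\le (n-1)/2$. It lists the vertices level by level from the deepest level upward, left to right within a level. It then finds the shortest prefix forest $T_i$ in which left edges outnumber right edges by exactly one, and applies the unary-edge-flipping involution $f$ to that prefix only. Injectivity comes from recovering $T_i$ in the image as the shortest prefix in which right edges outnumber left edges by one.

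You use the same underlying fact: unary edges can be reoriented freely without touching any label. But instead of a global injection, you partition the trees by the shape obtained by forgetting unary orientations. A class with $m$ binary vertices then contributes $\binom{n-1-2m}{i}$ to $\bona(n,m+i+1)$, which is a binomial row centered at $k=(n+1)/2$ regardless of $m$. This reduces everything to unimodality of binomial coefficients and gives the symmetry $\bona(n,k)=\bona(n,n-k+1)$ for free. It actually proves more. There are $2^m\binom{n-1}{2m}C_m$ classes with $m$ binary vertices, so you get the $\gamma$-positive expansion $\bona_n(u)=\sum_{m\ge 0}2^m\binom{n-1}{2m}C_m\,u^{m+1}(1+u)^{n-1-2m}$. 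This differs from the classical $\gamma$-expansion of the Narayana polynomials only by the factor $2^m$. What the paper's route buys is a single explicit injection defined uniformly on all trees, without any class decomposition, in the style of the author's earlier stack-sorting arguments.

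One correction to your optional last step. With your convention (right edge $=$ \texttt{(}, left edge $=$ \texttt{)}), the unmatched symbols of a word always form a block of \texttt{)} followed by a block of \texttt{(}. Flipping the \emph{first} unmatched \texttt{)} is not injective. Take the bare path on $5$ vertices ($m=0$, $N=4$, $i=1$), which is needed for $\bona(5,2)\le\bona(5,3)$. The words \texttt{()))} and \texttt{))()}, that is, RLLL and LLRL, are both sent to \texttt{()()}, that is, RLRL. The standard bracketing move (as in the Greene--Kleitman symmetric chain decomposition) flips the \emph{last} unmatched \texttt{)}, the one immediately preceding the unmatched \texttt{(} block. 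This preserves all existing matched pairs, and the flipped edge is recovered in the image as the first unmatched \texttt{(}. Since your class-by-class binomial argument already proves the theorem, this affects only the explicit injection, not the validity of the proof.
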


\begin{proof} As for any fixed $n$, the sequence $\bona(n,k)_{1\leq k \leq n}$ is symmetric, it suffices to show that $\bona(n,k)\leq \bona(n,k+1)$ if $k\leq (n-1)/2$. 
Let $\mathcal BoNa(n,k)$ be the set of 0-1 trees with $k-1$ right edges. We will construct an injection $z: \mathcal BoNa(n,k)\rightarrow \mathcal BoNa(n,k+1)$.
Note that our injection $z$ is a slightly modified version that the present author used to prove a unimodality result on $t$-stack sortable permutations in
\cite{stacksort, stacksort1}. A perhaps more accessible source is Section 8.2.3 of \cite{combperm}. 

In order to define this injection $z$, we need two tools. First, we define an involution $f:\mathcal B(n)\rightarrow \mathcal B(n)$, where $\mathcal B(n)$ is the set
of all 0-1 trees on $n$ vertices, as follows. Let $T\in \mathcal B(n)$, and let $v$ be a vertex of $T$.
Construct $f(T)$ as follows.
\begin{enumerate}
\item[(a)] If  $v$ has zero or two children, leave the subtrees of $v$
unchanged.
\item[(b)] If $v$ has a left subtree only, then turn that subtree into a
right subtree. 
\item[(c)]  If $v$ has a right subtree only, then turn that subtree into a
left subtree. 
\end{enumerate}
Leave all the labels unchanged. 
Let $f(T)$ be the tree we obtain from $T$ in this way. See Figure 2 for an example.

\begin{figure}[htb] \label{fig:involution}
\begin{center}\includegraphics[width=7cm]{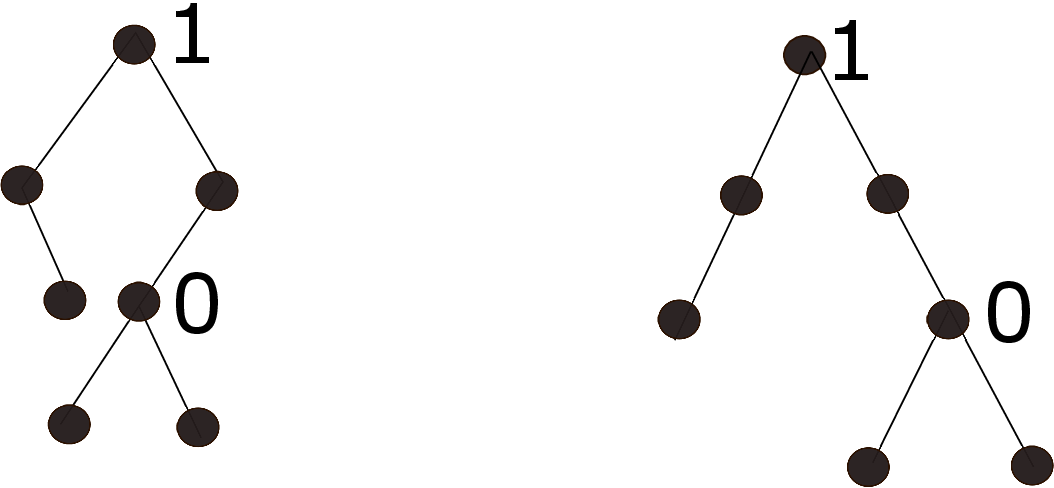}
\caption{A 0-1 tree $T$ and its image $f(T)$.}
\end{center} \end{figure}

Note that if $T$ has $k-1$ right edges, then $f(T)$ has $k-1$ left edges (and so, $n-k$ right edges). 

Second, for each $T\in \mathcal B(n)$, we define a total
order of the nodes of $T$ as follows. Let us say that a
node $v$ of $T$ is on {\em level} $j$ of $T$ if the distance of $v$ from
the root of $T$ is $j$. Then our total order consists of listing  the
nodes on the highest level of $T$ going from left to right, then the
nodes on the second highest level left to right, and so on, ending with
the root of $T$. 

Let $T_i$ be the subgraph of $T$ induced by the smallest $i$ vertices in
this total order. Then $T_i$ is either a tree or a forest with at least
two components. If $T_i$ is a
tree, then $f(T_i)$ is a tree as given by the definition of the map  
$f:\mathcal B(n)\rightarrow \mathcal B(n)$.
 If $T_i$ is a forest, then we define $f(T_i)$ as the plane forest whose
$h$th component is the image of the $h$th component of $T_i$. 

Now let $k\leq  (n-1)/2  $, and let $T\in \mathcal BoNa(n,k)$. We
define $z(T)$ as follows. Take the sequence $T_1, T_2, \cdots , T_n. $ 
Denote by $\ell(T)$  the number of left 
 edges of the forest $T$, and by $r(T)$ the number of right edges of the
forest $T$.
Find the smallest index $i$ so that $\ell(T_i)-r(T_i)=1$.
We will now explain why such an index always exists. If $T_2$ is a left
edge, then $\ell(T_2)-r(T_2)=1-0=1$, and we are done. Otherwise,   at the
beginning we have
$\ell(T_2)-r(T_2)<1$, while at the end we have
 \[\ell(T_n)-r(T_n) = (n-1-(k-1)) - (k-1) =n-2k+1 \geq 1 ,\] because of the restriction  $k\leq (n-1)/2$. 
 So at the beginning, $\ell(T_j)-r(T_j)$ is too small,
while at the end, it is too large. On the other hand, it is obvious that
as $i$ changes from 2 to $n$, at no step could $\ell(T_j)-r(T_j)$ ``skip''
a value as it could only change by 1. Therefore, for continuity reasons,
it has to be equal to 1 at some point, and we set $i$ to be the smallest
index for which this happens. 

Now apply $f$ to the forest $T_i$, and leave the rest of $T$
unchanged. Let $z(T)$ be the obtained tree. Before we prove that $z$ is
an injection, let us consider an example. 

\begin{example}
Figure 3 shows an example of the injection $z:\mathcal BoNa(9,4)\rightarrow \mathcal BoNa(9,5)$.
The large numbers denote the label of vertices with two children, whereas the small labels show the place of a
vertex in the total order we defined above. In this example, the smallest index $i$ so that 
$\ell(T_i)-r(T_i)=1$ is $i=3$.

 \begin{figure}[htb] \label{injection}
\begin{center}\includegraphics[width=7cm]{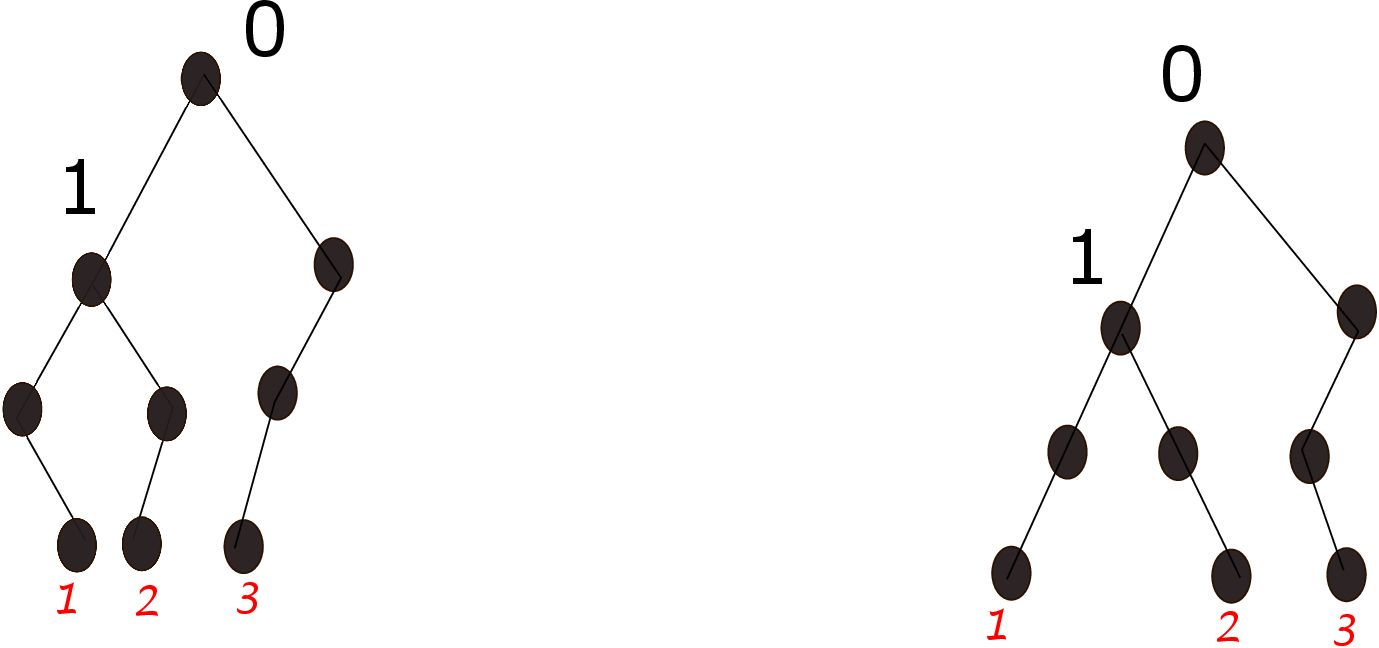}
\caption{The action of $z$ on a tree in $\mathcal BoNa(9,4)$.}
\end{center} \end{figure}
\end{example}

\begin{lemma} \label{zinj}
For all $k\leq  (n-1)/2  $, the function $z$ described
above is an injection from $\mathcal BoNa(n,k)$ into 
$\mathcal BoNa(n,k+1)$.
\end{lemma}

\begin{proof} 
It is clear that $z$ maps into $\mathcal BoNa(n,k+1)$ as $z$ consists of the
application of $f$ to a subgraph $T_i$ in which left edges outnumber
right edges by one. We know that $f$ turns the number of  left edges into the number of right edges
and vice versa, so $z$ indeed increases the number of right edges by 1. 

To see that $z$ is an injection, note that $z(T_i)$ is the smallest
subforest of $z(T)$ that consists of the first few nodes of $z(T)$ in
the total order of all nodes and in which right edges outnumber left
edges by one.

 Now let $U\in \mathcal BoNa(n,k+1)$. If $U$ does not have a subforest that consists
 of the first few nodes of $U$ in the total order of all nodes and in
 which the right edges outnumber the left edges by one, then by the
 previous paragraph, $U$ has no preimage under $z$. Otherwise, the
 unique preimage $z^{-1}(U)$ can be found by finding the smallest such
 subforest, applying $f$ to it, and leaving the rest of $U$ unchanged.
\end{proof}

Lemma \ref{zinj} shows that $\bona(n,k)\leq \bona(n,k+1)$ if $k\leq (n-1)/2$. This implies that the sequence $\{\bona(n,k)\}_{1\leq k\leq n}$ weakly increases to its midpoint. As the sequence
is symmetric, it follows that it weakly decreases after that. Therefore, the sequence is unimodal as claimed. 
\end{proof}

Note that $z$ never changes the labels of any vertex, nor does it change the ability of a vertex to receive a label, so this injection can be extended to other classes of labeled trees. 

\section{The real zeros property} \label{realzeros}
Let $\bona_n(u)=\sum_{k=1}^n \bona(n,k)u^k$. We call these polynomials the {\em Boolean-Narayana polynomials}.  
\begin{theorem} \label{realzerostheorem} For all  positive integers $n$, the Boolean-Narayana polynomial $\bona_n(u)$ has real roots only. 
\end{theorem}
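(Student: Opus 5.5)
The plan is to argue by induction on $n$, with the inductive hypothesis strengthened to: \emph{$\bona_n(u)$ has only real, simple, negative roots (apart from the trivial root at $0$), and these roots interlace those of $\bona_{n-1}(u)$.} The engine will be a three-term recurrence for the polynomials, used as a Sturm-type sequence.

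\emph{Step 1: a P-recurrence from the functional equation.} From the proof of Theorem \ref{explicit}, $T=T(z,u)$ satisfies $2uzT^2+((u+1)z-1)T+z=0$. Hence
\[T=\frac{1-(1+u)z-\sqrt{\Delta(z,u)}}{4uz},\qquad \Delta(z,u)=1-2(1+u)z+(1-6u+u^2)z^2 .\]
The series $T$ is algebraic of degree two in $z$. Its coefficient sequence is therefore P-recursive, and the square root of a quadratic produces a Legendre-type recurrence. Concretely, I would differentiate $\sqrt{\Delta}$ to get $\Delta\cdot\frac{d}{dz}\sqrt{\Delta}=\frac12\Delta'\sqrt{\Delta}$, then extract coefficients. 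This gives a relation of the shape
\[(n+1)\,\bona_{n+1}(u)=(2n-1)(1+u)\,\bona_n(u)-(n-2)(1-6u+u^2)\,\bona_{n-1}(u),\]
possibly with the indices and linear factors shifted; the exact constants are to be checked against the table $1;\ 1,1;\ 1,4,1;\ 1,9,9,1;\ 1,16,38,16,1$. Since $\bona_n(u)=u\sum_k t(n,k)u^k$, a harmless factor of $u$ must be tracked. The recurrence for the $t$-polynomials differs from the one for the $\bona_n$ only by this factor.

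\emph{Step 2: the Sturm/interlacing induction.} All coefficients are positive, so every real root is negative, and it suffices to work on $u<0$. On that half-line $1-6u+u^2>0$. The recurrence therefore has the form $P_{n+1}=A_n(u)P_n-B_n(u)P_{n-1}$ with $B_n(u)>0$ for $u<0$ (for $n\ge 3$; the small cases come from the table).

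Suppose $P_n$ has $n-1$ distinct negative roots $r_1<\dots<r_{n-1}$, strictly interlaced by those of $P_{n-1}$. Then $P_{n-1}(r_i)$ alternates in sign, and at each $r_i$ we have $P_{n+1}(r_i)=-B_n(r_i)P_{n-1}(r_i)$, which also alternates in sign. The intermediate value theorem gives a root of $P_{n+1}$ in each interval $(r_i,r_{i+1})$. Comparing signs at $r_{n-1}$ with $P_{n+1}(0^-)$ yields one more root in $(r_{n-1},0)$. Comparing signs at $r_1$ with the sign of $P_{n+1}$ at $-\infty$ (read off from its degree and positive leading coefficient) yields one more in $(-\infty,r_1)$. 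This accounts for all $n$ roots of $P_{n+1}$ (after dividing out $u$), so they are real, simple and interlace those of $P_n$.

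\emph{The main obstacle} is Step 1. I need the recurrence exactly right, including the correct shift and the factor of $u$. I must also confirm that the coefficient of $P_{n-1}$ is a \emph{positive} multiple of $(1-6u+u^2)$ on $u<0$, i.e.\ that its sign does not flip for small $n$. I would derive it from the ODE $\Delta\,\partial_z\sqrt\Delta=\tfrac12(\partial_z\Delta)\sqrt\Delta$ and verify it on $n\le 5$ using the table.

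If the Sturm bookkeeping at the endpoints becomes messy, I have a fallback. Write $[u^{k-1}]$ of $\frac1n[y^{n-1}]((1+y)+uy(1+2y))^n$ as in Theorem \ref{explicit}, and invoke a multiplier-sequence argument: $\binom{n}{k-1}$ times a real-rooted coefficient profile. I expect the recurrence route to be cleaner, and it simultaneously gives the interlacing mentioned in the introduction.
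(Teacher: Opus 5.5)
Your proof is correct, but it is not the route the paper takes for this theorem. The paper uses no recurrence here. From Lagrange inversion it derives $\bona_n(u)=\frac{(-1)^{n-1}}{2}r_2(u)^{n+1}N_n(q(u))$ with $q=r_1/r_2$. It then imports the real-rootedness of the Narayana polynomials and only has to check that $q$ is an increasing bijection of $(-\infty,0]$ onto itself.

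Your route is essentially the paper's alternative argument of Sections~\ref{rec}--\ref{Sturm}, made self-contained at both ends:
\begin{itemize}
\item You obtain the three-term recurrence directly from the quadratic equation for $T$, whereas the paper gets Theorem~\ref{3rec} by transporting the Narayana recurrence through $x=q(u)$.
\item You replace the citation of Lemma~\ref{liw} by an explicit Sturm induction.
\end{itemize}
The paper's route is shorter once the Narayana facts are granted, and it identifies the roots explicitly as $q^{-1}$ of the Narayana roots, so interlacing is inherited for free. Yours needs no outside input. It also yields simple roots and strict interlacing of the nonzero roots (Theorem~\ref{interlacing}) in the same induction, and gives an independent proof of Theorem~\ref{3rec}.

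One fix in Step 1, which also disposes of your stated obstacle. The relation you display is precisely the recurrence for $s_m=[z^m]\sqrt{\Delta}$. Extracting $[z^m]$ from $2\Delta S'=\Delta' S$ gives $(m+1)s_{m+1}=(2m-1)(1+u)s_m-(m-2)(1-6u+u^2)s_{m-1}$. Since $4uzT=1-(1+u)z-\sqrt{\Delta}$, you have $\bona_n(u)=-\tfrac14 s_{n+1}$ for $n\ge 1$. So for the polynomials themselves the recurrence reads
\[(n+1)\bona_n(u)=(2n-1)(1+u)\bona_{n-1}(u)-(n-2)(u^2-6u+1)\bona_{n-2}(u)\qquad (n\ge 3).\]
This matches the table; as written, your version would give $\bona_3=u(1+u)^2$.

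The last coefficient is strictly negative on $u<0$ for every $n\ge 3$. So there is no sign flip for small $n$, and the induction runs from $\bona_1/u=1$, $\bona_2/u=1+u$ with no exceptional cases.

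Your endpoint comparisons are then right. With $Q_n=\bona_n/u$ one has $Q_{n+1}(r_{n-1})=-B\,Q_{n-1}(r_{n-1})<0<Q_{n+1}(0)$. Also $Q_{n+1}(r_1)$ has sign $(-1)^{n-1}$, while $Q_{n+1}$ has sign $(-1)^n$ at $-\infty$. The multiplier-sequence fallback is unnecessary.
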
 

\begin{proof} 
Note that $\bona_n(u)=[z^n](uT(z,u))$. Note that for a fixed $u$, (\ref{lagrange-second}) implies, using (\ref{defofphi}), that
\[[z^n]T(z) =\frac{1}{n}[y^{n-1}]\phi(y)^n,\] while if we view $u$ as a variable, then we get
\[\sum_{k\geq 1}^n \bona(n,k) u^{k-1}=[z^n]T(z,u)=\frac{1}{n}[y^{n-1}]\left(1+(1+u)y+2uy^2\right)^n.\]
Multiplying both sides by $u$, we get
\begin{equation} \label{narpoly} \bona_n(u)= \frac{u}{n}[y^{n-1}]\left(1+(1+u)y+2uy^2\right)^n .\end{equation}

Now note that $\phi(y,u)$ is a quadratic polynomial in $y$ as we saw in (\ref{defofphi}). Let
\begin{equation} \label{defrs} \phi(y,u)=(1+(1+u)y+2uy^2=(1-r_1y)(1-r_2y),\end{equation} so that $r_1+r_2=-(1+u)$ and $r_1r_2=2u$.
Note that the discriminant of $\phi(y,u)$ as a quadratic polynomial of $y$ is
$(1+u)^2-8u=1-6u+u^2$, which is positive if $u<0$, so $r_1$ and $r_2$ are real for negative $u$. 
Then (\ref{narpoly}) becomes
\begin{equation} \label{bona-factored} \bona_n(u)=\frac{u}{n} [y^{n-1}](1-r_1y)^n(1-r_2y)^n.\end{equation}
Now let us expand and extract the coefficient of $y^{n-1}$ on the far-right side to get

\begin{equation} \label{almost-nar}
[y^{n-1}](1-r_1y)^n(1-r_2y)^n
=
(-1)^{n-1} r_2^{\,n-1}
\sum_{j=0}^{n-1}
\binom{n}{j}\binom{n}{j+1}
\left(\frac{r_1}{r_2}\right)^j.
\end{equation}

Recall that \(r_1\) and \(r_2\) are functions of \(u\), and set
\[
q(u)=\frac{r_1(u)}{r_2(u)}.
\]
Since the coefficients
\[
\binom{n}{j}\binom{n}{j+1}
\]
are constant multiples of the Narayana numbers, it is convenient to define the Narayana polynomial
by
\begin{equation}\label{narayana-poly}
N_n(q)=\frac{1}{n}\sum_{j=0}^{n-1}\binom{n}{j}\binom{n}{j+1}q^{j+1}.
\end{equation}
Then equation (\ref{almost-nar})  becomes
\begin{equation}  \label{bona-from-nar}
[y^{n-1}](1-r_1y)^n(1-r_2y)^n
=
(-1)^{n-1}r_2(u)^{\,n-1}\,n\,\frac{N_n(q(u))}{q(u)}.
\end{equation}

Comparing equation (\ref{bona-factored}) and equation (\ref{bona-from-nar}), we obtain
\[
\mathrm{BoNa}_n(u)
=
u(-1)^{n-1}r_2(u)^{\,n-1}\frac{N_n(q(u))}{q(u)}.
\]
Using the identities
\[
q(u)=\frac{r_1(u)}{r_2(u)}
\qquad\text{and}\qquad
r_1(u)r_2(u)=2u,
\]
we get
\[
\frac{u}{q(u)}
=
u\frac{r_2(u)}{r_1(u)}
=
\frac{r_2(u)^2}{2}.
\]
Therefore,
\begin{equation}   \label{bona-nar-explicit}
\mathrm{BoNa}_n(u)
=
\frac{(-1)^{n-1}}{2}\,r_2(u)^{\,n+1}N_n(q(u)).
\end{equation}

Equation (\ref{bona-nar-explicit}) shows the close connection between the polynomials \(\mathrm{BoNa}_n(u)\) and
\(N_n(q)\). It is the basis for our results in the rest of this section and the next.

It is known \cite{shen} that all roots of \(N_n(u)\) are real and simple. One of these roots is \(0\),
and the other \(n-1\) roots are negative. Other sources for these results include Theorem 5.3.1 in \cite{brenti}
and Theorem 3.2 in \cite{shen}.  Since \(r_2(u)\neq 0\) for \(u\le 0\), equation (\ref{bona-nar-explicit}) shows that the roots
of \(\mathrm{BoNa}_n(u)\) are precisely the preimages under \(q\) of the roots of \(N_n\). Therefore,
this will complete the proof of our theorem if we can show that the function \(q(u)\) is a bijection
from the set \((-\infty,0]\) to itself.

Set \(\Delta=1-6u+u^2\). As \(\Delta>0\) if \(u<0\), the square root of \(\Delta\) is defined as a real
number, and we set \(s=\sqrt{\Delta}\) for \(u<0\). Now note that
\[
q(u)=\frac{r_1(u)}{r_2(u)}=\frac{1+u-s}{1+u+s}.
\]
Therefore,
\[
q'(u)=\frac{-8(u-1)}{(u+1+s)^2s}.
\]
Now note that if \(u<0\), then \(-8(u-1)=8(1-u)>0\), and \((u+1+s)^2>0\), so \(q'(u)>0\), and so
\(q(u)\) is an increasing function on the interval \((-\infty,0)\). Finally, \(q\) extends continuously
to \(u=0\), with \(q(0)=0\), and
\[
\lim_{u\to -\infty} q(u)=-\infty.
\]
Therefore, \(q\) is a bijection from \((-\infty,0]\) onto \((-\infty,0]\).
\end{proof}

We say that  sequence $a_1, a_2, \cdots $ of nonnegative real numbers is  {\em log-concave}  if for all $i$, the inequality $a_{i-1}a_{k+1}\leq a_i^2$ holds. 
For finite sequences $a_1,a_2,\cdots ,a_n$, it is well known \cite{brenti} that if the generating polynomial $P(z)=\sum_{i=1}^n a_i z^i$ has real roots only, then the sequence
$a_1,a_2,\cdots ,a_n$ is log-concave. So Theorem \ref{realzerostheorem}  proves the following.

\begin{corollary} \label{logccor}  For all fixed $n$, the sequence $\bona(n,1),\bona(n,2),\cdots ,\bona(n,n)$ is log-concave. \end{corollary}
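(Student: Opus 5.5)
The corollary follows directly from Theorem \ref{realzerostheorem} together with Newton's inequalities. The only care needed is to get from the polynomial $\bona_n(u)$ to the coefficient sequence itself, and to check that the sequence has no internal zeros. First I factor out the trivial root: since $\bona(n,1)=1\neq 0$, we can write
\[\bona_n(u)=u\,P_n(u),\qquad P_n(u)=\sum_{k=1}^{n}\bona(n,k)u^{k-1}.\]
Here $P_n$ has degree $n-1$, because $\bona(n,n)=\bona(n,1)=1$. By Theorem \ref{realzerostheorem}, $P_n$ has only real roots. All its coefficients are nonnegative, and the constant term is positive, so all these roots are in fact negative.

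Next I apply Newton's inequalities. Writing $P_n(u)=\sum_{i=0}^{m}\binom{m}{i}e_i u^i$ with $m=n-1$, real-rootedness gives
\[e_i^2\ge e_{i-1}e_{i+1}\qquad\text{for } 1\le i\le m-1.\]
Substituting $a_i=\binom{m}{i}e_i$ turns this into
\[a_i^2\ge a_{i-1}a_{i+1}\cdot\frac{\binom{m}{i}^2}{\binom{m}{i-1}\binom{m}{i+1}}.\]
The binomial ratio on the right is at least $1$. This yields $a_i^2\ge a_{i-1}a_{i+1}$, which is exactly the log-concavity of $\bona(n,1),\dots,\bona(n,n)$ after the index shift $a_{k-1}=\bona(n,k)$. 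Alternatively, I could simply cite the result from \cite{brenti} quoted just before the corollary.

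Finally, I check that there are no internal zeros. This is not strictly needed for the inequality $a_{i-1}a_{i+1}\le a_i^2$, but it matters if one also wants unimodality to follow. By Theorem \ref{explicit}, every $\bona(n,k)$ with $1\le k\le n$ is positive: the $j=0$ term $\binom{n-k+1}{1}$ is positive. No step here is a real obstacle; the only subtlety is the factor of $u$, which I handle by working with $P_n$.
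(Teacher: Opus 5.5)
Your proposal is correct and follows the paper's own route. The paper deduces the corollary from Theorem~\ref{realzerostheorem} by citing the standard fact from \cite{brenti} that real-rootedness of the generating polynomial implies log-concavity of its nonnegative coefficients, and your Newton's-inequalities argument is exactly the usual proof of that fact. (Minor point: Theorem~\ref{explicit} only covers $k\le (n+1)/2$, so positivity of $\bona(n,k)$ for larger $k$ also needs the symmetry $\bona(n,k)=\bona(n,n-k+1)$, though positivity is not needed for the inequalities themselves.)
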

In other words, the Boolean-Narayana numbers are "horizontally log-concave". 
It would be interesting to give a combinatorial proof of this fact. We note that finite log-concave sequences of positive real numbers are unimodal, so it is perhaps not 
surprising that a combinatorial proof of Corollary \ref{logccor} is more difficult to find than such a proof of Theorem \ref{unimodaltheorem}.

We return to the function $q(u)$ and
 collect our observations about the function $q(u)$ in the following corollary for future use.
\begin{corollary} \label{properties}
If $u<0$, then $q(u)$ is a negative real number. In fact, the function 
\[q:(-\infty,0]\to(-\infty,0]\] is continuous and strictly increasing. As $q$ takes all real negative values,  it is a bijection.
\end{corollary}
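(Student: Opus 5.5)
The plan is to work directly with the closed form
\[
q(u)=\frac{1+u-s}{1+u+s},\qquad s=\sqrt{1-6u+u^2},
\]
which was obtained in the proof of Theorem \ref{realzerostheorem}. I will check each claim of the corollary in turn: negativity, continuity, strict monotonicity, and the two endpoint values. Surjectivity will then follow from the intermediate value theorem, and injectivity from strict monotonicity.

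\emph{Negativity.} I will avoid the explicit formula here and argue from Vieta's relations $r_1r_2=2u$ and $r_1+r_2=-(1+u)$. For $u<0$ the discriminant $\Delta$ is positive, so $r_1,r_2$ are real. Their product $2u$ is negative, so they are nonzero and of opposite signs. Hence $q=r_1/r_2<0$.

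\emph{Continuity.} On $(-\infty,0)$ we have $\Delta>0$, so $s$ is a smooth function of $u$. The only issue is the denominator, so I will show $1+u+s\neq 0$ there.
\begin{itemize}
\item If $1+u+s=0$, then $s=-(1+u)$.
\item Squaring gives $1-6u+u^2=1+2u+u^2$, which forces $u=0$.
\end{itemize}
So the denominator does not vanish for $u<0$, and $q$ is continuous, indeed differentiable, on $(-\infty,0)$. At $u=0$ we have $s\to 1$, so the numerator tends to $0$ and the denominator tends to $2$. Thus $q$ extends continuously with $q(0)=0$.

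\emph{Strict monotonicity.} The derivative formula
\[
q'(u)=\frac{-8(u-1)}{(u+1+s)^2s}
\]
from the proof of Theorem \ref{realzerostheorem} has a positive numerator for $u<0$. The denominator is also positive, since $s>0$ and we just showed $u+1+s\neq 0$. So $q'>0$ on $(-\infty,0)$, and $q$ is strictly increasing on $(-\infty,0]$.

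\emph{Behaviour at $-\infty$.} This is the only step needing a small computation, and I expect it to be the main point requiring care. For $u\to-\infty$, expanding the square root gives
\[
s=|u|\sqrt{1-6/u+1/u^2}=-u+3+O(1/|u|).
\]
Consequently $1+u+s\to 4$, while $1+u-s=2u-2+O(1/|u|)\to-\infty$. Hence $q(u)\to-\infty$.

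\emph{Conclusion.} Since $q$ is continuous on $(-\infty,0]$ with $q(0)=0$ and $q(u)\to-\infty$ as $u\to-\infty$, the intermediate value theorem shows that $q$ attains every value in $(-\infty,0]$. Strict monotonicity gives injectivity, so $q$ is a bijection of $(-\infty,0]$ onto itself.
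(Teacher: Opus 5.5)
Your proposal is correct and follows essentially the same route as the paper, which also uses the closed form $q(u)=(1+u-s)/(1+u+s)$, the sign of $q'(u)=-8(u-1)/\bigl((u+1+s)^2s\bigr)$ on $(-\infty,0)$, the value $q(0)=0$, and $q(u)\to-\infty$ as $u\to-\infty$. You fill in details the paper leaves implicit: that $1+u+s\neq 0$ for $u<0$, and the asymptotic expansion of $s$. You also get negativity from $r_1r_2=2u<0$ rather than from monotonicity together with $q(0)=0$; this changes nothing essential.
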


We are now ready to prove our next theorem.

\begin{theorem} \label{interlacing}
The polynomials \(\{\mathrm{BoNa}_n(u)\}_{n\ge 2}\) have weakly interlacing roots. More precisely,
each polynomial \(\mathrm{BoNa}_n(u)\) has a root at \(0\), and the nonzero roots of
\(\mathrm{BoNa}_{n-1}(u)\) strictly interlace the nonzero roots of \(\mathrm{BoNa}_n(u)\).
\end{theorem}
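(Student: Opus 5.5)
The plan is to transfer the interlacing from the Narayana polynomials through the map $q$ of Corollary \ref{properties}, using (\ref{bona-nar-explicit}). First, the root at $0$: since $\bona_n(u)=\sum_{k=1}^n\bona(n,k)u^k$ has no constant term, $u=0$ is a root. Its multiplicity is exactly one, because $\bona(n,1)=1\neq 0$. All other roots are real by Theorem \ref{realzerostheorem}, and they are negative since all coefficients are positive. There are $n-1$ of them counted with multiplicity.

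Next I identify these roots. By (\ref{bona-nar-explicit}), for $u<0$ we have $\bona_n(u)=\frac{(-1)^{n-1}}{2}r_2(u)^{n+1}N_n(q(u))$ with $r_2(u)\neq 0$. Hence the negative roots of $\bona_n$ are exactly the points $q^{-1}(\alpha)$, where $\alpha$ runs over the negative roots of $N_n$. By \cite{shen}, $N_n$ has $n-1$ simple negative roots $\alpha^{(n)}_1<\cdots<\alpha^{(n)}_{n-1}$. Since $q$ is a strictly increasing bijection of $(-\infty,0)$ onto itself, this gives $n-1$ distinct negative roots $\beta^{(n)}_i=q^{-1}(\alpha^{(n)}_i)$ of $\bona_n$. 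By the count above, these are all of them, and each is simple. Because $q$ is the same function for every $n$ (it depends only on $\phi$, not on $n$), it preserves order uniformly across $n$.

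The key input is then the classical fact that the nonzero roots of $N_{n-1}$ strictly interlace those of $N_n$:
\[\alpha^{(n)}_1<\alpha^{(n-1)}_1<\alpha^{(n)}_2<\cdots<\alpha^{(n-1)}_{n-2}<\alpha^{(n)}_{n-1}<0.\]
Applying the increasing bijection $q^{-1}$ to this chain yields the same chain for the $\beta$'s, which is the claim. The common root $0$ gives the "weakly" interlacing statement for the full root sets.

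The main obstacle is justifying the Narayana interlacing. I would cite it: Narayana polynomials form a Sturm-type sequence, and interlacing follows from Theorem 3.2 in \cite{shen} or Brenti \cite{brenti}. If a self-contained argument is wanted, I would use the known three-term recurrence
\[(n+1)N_n(q)=(2n-1)(1+q)N_{n-1}(q)-(n-2)(1-q)^2N_{n-2}(q).\]
At a root $\alpha$ of $N_{n-1}$ this gives $N_n(\alpha)N_{n-2}(\alpha)<0$. The standard induction then shows that $N_n$ changes sign between consecutive roots of $N_{n-1}$, with endpoint sign checks at $0$ and $-\infty$ from the leading coefficients. Everything else in the plan is a direct consequence of the monotonicity of $q$.
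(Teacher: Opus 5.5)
Your proposal is correct and follows essentially the same route as the paper. Both arguments take the strict interlacing of the nonzero roots of the Narayana polynomials (which the paper simply cites) and carry it over to $\mathrm{BoNa}_n$ through the identity (\ref{bona-nar-explicit}), using that $q$ is a single, $n$-independent, order-preserving bijection of $(-\infty,0]$ onto itself. Your extra bookkeeping is sound and only makes explicit what the paper leaves implicit: the simplicity of the root $0$ from $\mathrm{BoNa}(n,1)=1$, the root count showing that the $n-1$ preimages are all the negative roots and are simple, and the optional recurrence-based proof of the Narayana interlacing.
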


\begin{proof}
It is well-known \cite{kostov} that the Narayana polynomials \(\{N_n(q)\}_{n\ge 2}\) have real simple roots,
that \(0\) is a root of each \(N_n\), and that the nonzero roots of \(N_{n-1}(q)\) strictly interlace
the nonzero roots of \(N_n(q)\).

By Corollary \ref{properties}, the function
\[
q:(-\infty,0]\to(-\infty,0]
\]
is an order-preserving bijection, and therefore so is \(q^{-1}\).

Since \(r_2(u)\neq 0\) for \(u\le 0\), it follows from (\ref{bona-nar-explicit}) that the roots of \(\mathrm{BoNa}_n(u)\) are precisely
the preimages under \(q\) of the roots of \(N_n\). In particular, \(0\) is a root of \(\mathrm{BoNa}_n(u)\),
and the nonzero roots of \(\mathrm{BoNa}_{n-1}(u)\) are the images under \(q^{-1}\) of the nonzero roots
of \(N_{n-1}(q)\). Since \(q^{-1}\) preserves order, the strict interlacing of the nonzero roots is preserved.

Therefore, the polynomials \(\{\mathrm{BoNa}_n(u)\}_{n\ge2}\) have weakly interlacing roots.
\end{proof}

\section{A three-term recurrence} \label{rec}
In this section, we use a known three-term recurrence for the Narayana polynomials and the relation (\ref{bona-nar-explicit}) between the Narayana
polynomials and the Boolean-Narayana polynomials to prove a three-term recurrence for the latter.

Recall that we defined $r_1(u)$ and $r_2(u)$ right after (\ref{defrs}). An equivalent definition is that they are  the two roots of
\begin{equation}\label{eq:roots}
t^2+(1+u)t+2u=0.
\end{equation}
Also recall that this means that
$r_1(u)+r_2(u)=-(1+u)$, and $ r_1(u)r_2(u)=2u$, and keep the notation
$q(u)=\frac{r_1(u)}{r_2(u)}$.

We will use the standard three-term recurrence for the Narayana polynomials  \cite{kostov}, which states that
\begin{equation}\label{eq:NarayanaRec}
(n+1)N_n(x)=(2n-1)(1+x)N_{n-1}(x)-(n-2)(x-1)^2N_{n-2}(x)
\end{equation}
if $n\geq 3$. 

Substitute $x=q(u)$ in \eqref{eq:NarayanaRec} and multiply both sides by
$u(-1)^{n-1}r_2(u)^{n-1}$. Using \eqref{bona-from-nar} that describes the relation between the polynomials $\bona_n$ and $N_n$, 
 we obtain
\begin{align}
(n+1)\mathrm{BoNa}_n(u)
&=(2n-1)\,u(-1)^{n-1}r_2^{n-1}\bigl(1+q\bigr)\,N_{n-1}(q)
-(n-2)\,u(-1)^{n-1}r_2^{n-1}\bigl(q-1\bigr)^2\,N_{n-2}(q)\nonumber\\
&=-(2n-1)\,r_2(u)\bigl(1+q(u)\bigr)\,\mathrm{BoNa}_{n-1}(u)
-(n-2)\,r_2(u)^2\bigl(q(u)-1\bigr)^2\,\mathrm{BoNa}_{n-2}(u).
\label{eq:BoNaRecIntermediate}
\end{align}

The following identities enable us to eliminate $r_1$, $r_2$, and $q$ from the obtained recurrence.
First,
\[
-r_2(1+q)=-r_2\left(1+\frac{r_1}{r_2}\right)=-(r_1+r_2)=1+u.
\]
Second,
\[
r_2^2(q-1)^2=r_2^2\left(\frac{r_1-r_2}{r_2}\right)^2=(r_1-r_2)^2.
\]
Using \eqref{eq:roots},
\[
(r_1-r_2)^2=(r_1+r_2)^2-4r_1r_2=(1+u)^2-8u=u^2-6u+1.
\]
Substituting these into \eqref{eq:BoNaRecIntermediate} yields the  three-term recurrence that was our goal to prove in this section.
\begin{theorem} \label{3rec}
For all $n\geq 3$, the recurrence relation
\begin{equation}\label{eq:BoNaRecFinal}
\ (n+1)\,\mathrm{BoNa}_n(u)=(2n-1)(1+u)\,\mathrm{BoNa}_{n-1}(u)
-(n-2)(u^2-6u+1)\,\mathrm{BoNa}_{n-2}(u),
\end{equation}
holds. 
\end{theorem}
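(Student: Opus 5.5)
We will derive the recurrence from the three-term recurrence (\ref{eq:NarayanaRec}) for the Narayana polynomials, transported through the identity (\ref{bona-nar-explicit}). We first work on the half-line $u<0$, where $r_1(u)$ and $r_2(u)$ are real, $r_2(u)\neq 0$, and $q(u)=r_1(u)/r_2(u)$ is a well-defined real number (Corollary \ref{properties}). We fix the branch of $r_2$ used in the proof of Theorem \ref{realzerostheorem} throughout, so that (\ref{bona-nar-explicit}) holds for every $n$ with the same function $r_2$. We set
\[
c_n(u)=\frac{(-1)^{n-1}}{2}\,r_2(u)^{n+1},
\]
so that $\mathrm{BoNa}_n(u)=c_n(u)N_n(q(u))$.

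Next we substitute $x=q(u)$ into (\ref{eq:NarayanaRec}) for $n\ge 3$ and multiply both sides by $c_n(u)$. The left side becomes $(n+1)\mathrm{BoNa}_n(u)$. For the right side we use the two ratios
\[
c_n=-r_2\,c_{n-1}\qquad\text{and}\qquad c_n=r_2^2\,c_{n-2}.
\]
These give $c_nN_{n-1}(q)=-r_2\,\mathrm{BoNa}_{n-1}(u)$ and $c_nN_{n-2}(q)=r_2^2\,\mathrm{BoNa}_{n-2}(u)$. This yields exactly (\ref{eq:BoNaRecIntermediate}):
\[
(n+1)\mathrm{BoNa}_n=-(2n-1)\,r_2(1+q)\,\mathrm{BoNa}_{n-1}-(n-2)\,r_2^2(q-1)^2\,\mathrm{BoNa}_{n-2}.
\]
We then apply the identities displayed just before the theorem, $-r_2(1+q)=-(r_1+r_2)=1+u$ and $r_2^2(q-1)^2=(r_1-r_2)^2=(1+u)^2-8u=u^2-6u+1$, both of which follow from Vieta's formulas for (\ref{eq:roots}). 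This gives (\ref{eq:BoNaRecFinal}) for every $u<0$.

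The step I expect to need the most care is the passage from "for all $u<0$" to an identity of polynomials. Both sides of (\ref{eq:BoNaRecFinal}) are polynomials in $u$, and they agree on the infinite set $(-\infty,0)$, so they coincide identically. The only other pitfall is the sign and power bookkeeping in the ratios $c_n/c_{n-1}$ and $c_n/c_{n-2}$. This is also where one should double-check against the paper's own phrasing: it multiplies by $u(-1)^{n-1}r_2^{n-1}$ via (\ref{bona-from-nar}), which by $u/q=r_2^2/2$ is the same factor $c_n$.

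As a sanity check we test $n=3$ against the table: $\mathrm{BoNa}_2=u+u^2$, $\mathrm{BoNa}_1=u$, $\mathrm{BoNa}_3=u+4u^2+u^3$. Then
\[
5(1+u)(u+u^2)-(u^2-6u+1)u=4u+16u^2+4u^3=4\,\mathrm{BoNa}_3(u),
\]
as required.
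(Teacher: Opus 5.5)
Your proposal is correct and follows the paper's own proof: substitute $x=q(u)$ into the Narayana recurrence, multiply by the normalizing factor from (\ref{bona-nar-explicit}), and eliminate $r_1$, $r_2$ and $q$ via Vieta's formulas; your explicit step from ``all $u<0$'' to an identity of polynomials makes precise what the paper leaves implicit. One small correction to your closing remark: your factor $c_n=u(-1)^{n-1}r_2^{n-1}/q$ is the right multiplier, whereas the multiplier the paper literally displays, $u(-1)^{n-1}r_2^{n-1}$, omits the $1/q$, so it equals $c_n$ only once that factor is restored.
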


With initial values 
\[
\mathrm{BoNa}_1(u)=u,\qquad \mathrm{BoNa}_2(u)=u+u^2,
\]
the recurrence \eqref{eq:BoNaRecFinal} determines all polynomials $\mathrm{BoNa}_n(u)$.

\section{Proving the interlacing property from the three-term recurrence} \label{Sturm}
Theorem \ref{3rec} and a result of Liu and Wang \cite{liuwang} yield an alternative proof for Theorem \ref{interlacing} that states that 
the polynomials $\bona_n(u)$ have weakly interlacing roots. The  mentioned result from \cite{liuwang} is more general than what we need here,
and we only cite the case that we will use. 

\begin{lemma} (Corollary 2.4 in \cite{liuwang}) \label{liw}  Let $\{P_n(x)\}_n$ be  a sequence of polynomials with positive coefficients
  that satisfies the recurrence relation
\begin{equation} \label{liuwangrec} P_n(x)=a_n(x)P_{n-1}(x)+b_n(x)P_{n-1}'(x)+c(x)P_{n-2}(x), \end{equation}
where $a_n$, $b_n$ and $c_n$ are polynomials with real coefficients so that 
$\mathrm{deg} P_n = \mathrm{deg} P_{n-1}+1$. If $b_n(x)\leq 0$ and $c_n(x)\leq 0$ whenever $x\leq 0$, 
and at least one of $b_n(x)<0$ and $c_n(x)<0$ holds for all $n$, then the roots of $P_n(x)$ and $P_{n-1}(x)$
are weakly interlacing.
\end{lemma}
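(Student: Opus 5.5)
The plan is to prove the lemma by induction on $n$. The induction hypothesis is that $P_{n-1}$ and $P_{n-2}$ have only real roots and that the roots of $P_{n-2}$ weakly interlace those of $P_{n-1}$. Because all coefficients of each $P_m$ are positive, $P_m(x)>0$ for $x>0$, so every root is nonpositive. This is why the sign hypotheses on $b_n$ and $c_n$ are only needed on $(-\infty,0]$. The base case is a direct check of the first two polynomials.

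For the inductive step, I first reduce to simple roots. Any common root of $P_{n-1}$ and $P_{n-2}$, and any multiple root of $P_{n-1}$, is also a root of $P_{n-1}'$. The recurrence (\ref{liuwangrec}) then shows it is a root of $P_n$. Dividing out these common factors changes $a_n$, $b_n$, $c_n$ in a harmless way, and "weakly" interlacing exactly absorbs such coincident roots. So assume $P_{n-1}$ has simple roots $\rho_1>\rho_2>\cdots>\rho_d$, all in $(-\infty,0]$, and that $P_{n-2}$ has no root in common with them.

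At each root, the recurrence gives
\[
P_n(\rho_i)=b_n(\rho_i)P_{n-1}'(\rho_i)+c_n(\rho_i)P_{n-2}(\rho_i).
\]
Since $P_{n-1}$ has positive leading coefficient, $\operatorname{sgn}P_{n-1}'(\rho_i)=(-1)^{i-1}$. Interlacing of $P_{n-2}$ with $P_{n-1}$, together with positive leading coefficients, forces $\operatorname{sgn}P_{n-2}(\rho_i)=(-1)^{i-1}$ as well: there is exactly one root of $P_{n-2}$ in each gap, and $P_{n-2}>0$ to the right of all of them. Because $b_n(\rho_i)\le 0$ and $c_n(\rho_i)\le 0$, and at least one of them is strictly negative, both terms have sign $(-1)^i$ or vanish, and not both vanish. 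Hence $\operatorname{sgn}P_n(\rho_i)=(-1)^i$, strictly.

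The intermediate value theorem then gives a root of $P_n$ in each open interval $(\rho_{i+1},\rho_i)$, which is $d-1$ roots. Since $P_n(\rho_1)<0$ and $P_n(0)\ge 0$ (or $P_n>0$ just to the right of $0$), there is one more root in $(\rho_1,0]$. At the other end, $P_n(\rho_d)$ has sign $(-1)^d$. Since $\deg P_n=d+1$ and the leading coefficient is positive, $P_n(x)$ has sign $(-1)^{d+1}$ as $x\to-\infty$, giving a root in $(-\infty,\rho_d)$. This accounts for all $d+1$ roots, so $P_n$ is real-rooted and its roots interlace those of $P_{n-1}$, completing the induction.

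The main obstacle I expect is the degenerate bookkeeping: roots at $0$, common roots, and the points where $b_n$ or $c_n$ vanishes. The strict-negativity hypothesis is exactly what keeps $P_n(\rho_i)\neq 0$ after the reduction. I would handle this reduction carefully first, before running the sign argument.
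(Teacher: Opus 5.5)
The paper gives no proof of this lemma; it quotes it from Liu and Wang, so your argument has to stand on its own. Its core is the standard argument: evaluate $P_n$ at the roots of $P_{n-1}$, get strictly alternating signs, and count roots with the intermediate value theorem. That part is correct once $P_{n-1}$ has simple roots, none of which is a root of $P_{n-2}$.

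\textbf{The gap is the reduction to that case.} It is false that a common root $r$ of $P_{n-1}$ and $P_{n-2}$ must be a root of $P_{n-1}'$, and hence of $P_n$. If $r$ is a simple root of $P_{n-1}$, the recurrence gives $P_n(r)=b_n(r)P_{n-1}'(r)$, which is nonzero as soon as $b_n(r)<0$. For instance, $P_{n-2}=x+1$, $P_{n-1}=(x+1)(x+2)$ and $b_n=c_n=-1$ give $P_n(-1)=-1$ for every $a_n$. So such a factor cannot be divided out of all three polynomials, and your standing assumption that $P_{n-2}$ shares no root with $P_{n-1}$ is unjustified. This is not a marginal case for the paper: every $\bona_n$ vanishes at $0$, so consecutive polynomials in the application always share a root. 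There $b_n\equiv 0$ happens to make the division legitimate.

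To repair it, keep such roots in the sign argument instead of removing them. Suppose $\rho_i$ is a simple root of $P_{n-1}$ with $P_{n-2}(\rho_i)=0$.
\begin{itemize}
\item If $b_n(\rho_i)<0$, then $P_n(\rho_i)=b_n(\rho_i)P_{n-1}'(\rho_i)$ already has the strict sign $(-1)^i$.
\item If $b_n(\rho_i)=0$, then $x-\rho_i$ divides $b_n$, $P_{n-1}$ and $P_{n-2}$, hence $P_n$. Writing $P_{n-1}=(x-\rho_i)Q$ and $P_{n-2}=(x-\rho_i)R$ gives $P_n/(x-\rho_i)=\bigl(a_n+b_n/(x-\rho_i)\bigr)Q+b_nQ'+c_nR$. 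This is a relation of the same shape with the same $b_n,c_n$, and reinserting the common root $\rho_i$ preserves weak interlacing.
\end{itemize}

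Multiple roots of $P_{n-1}$ are not excluded, since only weak interlacing is inherited. Such a root $r$ is a root of $P_{n-1}'$ and, by interlacing, of $P_{n-2}$, so of $P_n$. However, after dividing by $x-r$ the middle term is $b_nP_{n-1}'/(x-r)$, which is not $b_n$ times the derivative of $P_{n-1}/(x-r)$. So the reduction does not merely change $a_n,b_n,c_n$ harmlessly; you must check that the sign analysis survives. It does: after cancelling the full power $(x-r)^{m-1}$, the quotient of $P_{n-1}'$ equals $m$ times the derivative of the reduced polynomial at $r$, and agrees with that derivative at its other simple roots.

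Finally, the base case does not follow from the hypotheses as stated. You must assume that the first two polynomials are real-rooted and interlacing. This is automatic if the first is constant, and it holds for $\bona_1=u$, $\bona_2=u+u^2$.
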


Here is the alternative proof of Theorem \ref{interlacing} that we promised. 

\begin{proof} Let $n\geq 3$. 
Divide \eqref{eq:BoNaRecFinal} by $(n+1)$ to obtain
\[
\bona_n(u)=a_n(u)\,\bona_{n-1}(u)+b_n(u)\,\bona'_{n-1}(u)+c_n(u)\,\bona_{n-2}(u),
\]
where
\[
a_n(u)=\frac{2n-1}{n+1}(1+u),\qquad b_n(u)=0,\qquad
c_n(u)=-\frac{n-2}{n+1}(u^2-6u+1).
\]
Since $b_n(u)\equiv 0$, we have $b_n(u)\le 0$ for all $u\le 0$.
Moreover, for $u\le 0$ one has $u^2-6u+1=u^2+(-6u)+1\ge 1$, hence for $n\ge 3$, we have
\[
c_n(u)=-\frac{n-2}{n+1}(u^2-6u+1)<0\qquad\text{for all }u\le 0.
\]
By construction, each $\bona_n(u)$ has nonnegative coefficients, and $\deg \bona_n=n$.
So,  the hypotheses of Lemma \ref{liw} are satisfied; therefore,  each polynomial $\bona_n$ is real-rooted with simple zeros, and the  roots of $\bona_{n-1}$
weakly interlace the roots of $\bona_n$ for all $n\ge 2$.
\end{proof}

\section{Log-concavity for fixed $k$}  \label{logc-vert}
\begin{theorem} For any fixed $k>1$, the sequence $\{\bona(n,k)\}_{n\geq k+1}$ is log-concave.
\end{theorem}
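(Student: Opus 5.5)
The plan is to go back to the coefficient-extraction formula from the proof of Theorem \ref{explicit}, before the restriction $k\le (n+1)/2$ was imposed. That step gives, for all $n\ge k$,
\[
\bona(n,k)=\frac{1}{n}{n\choose k-1}\,S_k(n),\qquad S_k(n)=\sum_{j\ge 0}2^j{k-1\choose j}{n-k+1\choose j+1},
\]
where binomial coefficients vanish outside their natural range. For fixed $k$ this writes $\bona(n,k)$ as a product of two sequences in $n$. I will show that each factor is a positive log-concave sequence for $n\ge k+1$. Since the termwise product of positive log-concave sequences is log-concave, the theorem then follows.

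\textbf{The first factor.} Write $\frac{1}{n}{n\choose k-1}=\frac{1}{k-1}{n-1\choose k-2}$, using $k>1$. Its log-concavity in $n$ follows from the ratio
\[
\frac{{n\choose k-2}}{{n-1\choose k-2}}=\frac{n}{n-k+2},
\]
which is decreasing in $n$. A decreasing ratio of consecutive terms is equivalent to log-concavity for positive sequences.

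\textbf{The second factor.} Set $m=n-k+1\ge 2$ and view $S_k$ as a function of $m$. Its generating function is
\[
\sum_{m\ge 1} S_k\, x^m=\sum_j 2^j{k-1\choose j}\frac{x^{j+1}}{(1-x)^{j+2}}=\frac{x}{(1-x)^2}\Bigl(1+\frac{2x}{1-x}\Bigr)^{k-1}=\frac{x(1+x)^{k-1}}{(1-x)^{k+1}}.
\]
Hence $S_k(m)$ is the convolution of two sequences:
\begin{enumerate}
\item[(i)] the finite sequence ${k-1\choose i}$, $0\le i\le k-1$, which is log-concave with no internal zeros;
\item[(ii)] the coefficient sequence of $x/(1-x)^{k+1}$, namely ${m-1+k\choose k}$ (suitably shifted), which is log-concave in $m$ by the same ratio argument and has no internal zeros.
\end{enumerate}
Explicitly, $S_k(m)=\sum_i {k-1\choose i}{m-1-i+k\choose k}$. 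By the standard fact that the convolution of log-concave sequences of nonnegative numbers without internal zeros is again log-concave and without internal zeros (see \cite{brenti}), $S_k(m)$ is log-concave in $m$, and hence in $n$.

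\textbf{Main obstacle.} The step I expect to need the most care is the bookkeeping in the second factor. First, the general formula must be valid for all $n\ge k+1$, including $n$ with $k>(n+1)/2$. The coefficient extraction $[y^{n-k}](1+2y)^{k-1}(1+y)^{n-k+1}$ holds without that restriction, so the sum over $j$ with vanishing binomials is correct. Second, I must check the index shift between $m$ and $n$ in the generating function and verify that neither factor has zeros in the range $n\ge k+1$. Once that is checked, the convolution lemma and the product of the two positive log-concave factors give the log-concavity of $\{\bona(n,k)\}_{n\ge k+1}$.
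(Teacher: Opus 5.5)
Your proof is correct, but it takes a different route from the paper's.

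The paper does not factor $\bona(n,k)$. It absorbs the prefactor into the sum via the identity $\binom{n-1}{k-2}\binom{n-k+1}{j+1}=\binom{k+j-1}{k-2}\binom{n-1}{k+j-1}$. This writes $\bona(n,k)=\sum_m c_m\binom{n-1}{m}$ as a single binomial transform of a sequence $c_m$ supported on $k-1\le m\le 2k-2$. It then quotes the fact that binomial transforms preserve log-concavity (Lemma~\ref{bintransform}).

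You instead write $\bona(n,k)$ as the termwise product of $\frac{1}{k-1}\binom{n-1}{k-2}$ and $S_k$. You handle the first factor with a ratio check. You handle $S_k$ through its generating function $x(1+x)^{k-1}/(1-x)^{k+1}$, that is, as the convolution of $\binom{k-1}{i}$ with $\binom{t+k}{k}$.

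What each route buys:
\begin{itemize}
\item The paper's route is shorter once the binomial-transform lemma is granted. However, it also needs $c_m$ itself to be log-concave, and the paper never checks this; it only verifies that there are no internal zeros. The missing check is easy, since $a_j$ is a product of factors that are log-concave in $j$.
\item Your route needs only closure of log-concavity under termwise products of positive sequences (trivial) and under convolution (classical), and you verify each hypothesis explicitly.
\item Your route also yields the closed-form generating function of $S_k$ as a by-product.
\end{itemize}

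Finally, you explicitly justify using the explicit formula for $k+1\le n<2k-1$, where Theorem~\ref{explicit} as stated does not apply. The paper uses the formula in that range without comment.
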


In other words, the Boolean-Narayana numbers are "vertically log-concave". 
We will need the following lemma, which is Proposition 2.5.1 in \cite{brenti}. Note that we say that a sequence of real numbers $z_0,z_1,\cdots $ has
no internal zeros if there are no three indices $i<j<k$ so that $z_iz_k\neq 0$ but $z_j=0$.

\begin{lemma} \label{bintransform} Let $a$ and $b$ be two nonnegative integers, and let $z_0,z_1,\cdots $ be a log-concave sequence of nonnegative real numbers with no internal zeros.
Then the sequence $v_0,v_1,\cdots $ defined by
\[v_n = \sum_{j=0}^n {n \choose j}z_j =  \sum_{j\geq 0} {n \choose j}z_j \]
is log-concave.
\end{lemma}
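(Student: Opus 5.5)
The plan is to prove the lemma directly, by expanding the relevant $2\times 2$ determinant and using Abel summation. The integers $a,b$ in the statement play no role for the plain binomial transform, so I will ignore them. Fix $n\ge 1$. We want $v_n^2-v_{n-1}v_{n+1}\ge 0$. Writing both products as double sums gives
\[
v_n^2-v_{n-1}v_{n+1}=\sum_{i,j\ge 0}\left[{n\choose i}{n\choose j}-{n-1\choose i}{n+1\choose j}\right]z_iz_j .
\]
Since $z_iz_j$ is symmetric in $(i,j)$, I will replace the bracket by its symmetrization
\[
d_s(i)={n\choose i}{n\choose s-i}-\tfrac12\left[{n-1\choose i}{n+1\choose s-i}+{n+1\choose i}{n-1\choose s-i}\right],
\]
where $s=i+j$. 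I then group the sum by the value of $s$. It suffices to prove, for each fixed $s$, that $\sum_i d_s(i)\,x_i\ge 0$, where $x_i=z_iz_{s-i}$.

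Two facts about the $x_i$ and the $d_s(i)$ drive the argument. First, by Vandermonde's identity, $\sum_i d_s(i)={2n\choose s}-{2n\choose s}=0$. Second, log-concavity of $(z_j)$ together with the absence of internal zeros gives $z_{i-1}z_{s-i+1}\le z_iz_{s-i}$ whenever $i\le s/2$. Indeed, $z_{i-1}/z_i\le z_{s-i}/z_{s-i+1}$ on the support, and the support is an interval, so the zero cases are harmless. Hence the sequence $x_i$ is symmetric about $s/2$ and weakly increases toward the center.

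Next I use Abel summation. I pair $i$ with $s-i$ (both $d_s$ and $x$ are symmetric under this swap) and sum over $i\le s/2$. Then $\sum_i d_s(i)x_i$ becomes a combination of the nonnegative increments $x_i-x_{i-1}$. The coefficient of each increment is minus a tail partial sum $D(i)=\sum_{t<i}\bigl(d_s(t)+d_s(s-t)\bigr)$, with the center term counted once. So it is enough to show $D(i)\le 0$ for every $i\le s/2$. Because the total sum is zero, this would follow if $d_s(i)$, for $i\le s/2$, changes sign at most once: first nonpositive on the outside, then nonnegative near the center.

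The main obstacle is this single sign change. My first attempt will be to divide by ${n\choose i}{n\choose s-i}$. Using ${n-1\choose i}/{n\choose i}=(n-i)/n$ and ${n+1\choose j}/{n\choose j}=(n+1)/(n+1-j)$, the condition $d_s(i)\ge 0$ reduces to the inequality
\[
2\ \ge\ \frac{(n-i)(n+1)}{n(n+1-s+i)}+\frac{(n+1)(n-s+i)}{n(n+1-i)} .
\]
After clearing denominators, this should be a quadratic condition in $i$ that is symmetric about $s/2$. Such a condition holds on a central interval and fails outside it, which gives exactly the one sign change needed. The boundary cases, where binomials vanish, will need a separate check. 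If that computation becomes messy, the fallback is to cite total positivity instead. One would note that log-concavity with no internal zeros means the Toeplitz matrix $(z_{j-i})$ is TP$_2$, that the Pascal matrix is totally positive, and then apply the Cauchy–Binet formula. This is essentially Brenti's original route.
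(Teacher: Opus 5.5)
Your proof is correct, and it takes a different route from the paper. The paper gives no argument of its own: it imports the lemma from Brenti's memoir, where it sits in the total-positivity framework. Your fallback (Pascal matrix totally positive, log-concave $z$ giving TP$_2$ data, Cauchy--Binet) is essentially that route.

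Your main argument is instead direct and elementary:
\begin{itemize}
\item You symmetrize the quadratic form $v_n^2-v_{n-1}v_{n+1}$ and split it along antidiagonals $i+j=s$.
\item The kernel $d_s$ sums to zero by Vandermonde.
\item The weights $x_i=z_iz_{s-i}$ are symmetric and increase toward $s/2$ by log-concavity with no internal zeros.
\item $d_s$ has a single sign change on $[0,s/2]$, and Abel summation then finishes.
\end{itemize}

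The step you hedged with ``should be'' does go through. Put $c=2n-s$. When $\binom{n}{i}\binom{n}{s-i}>0$, clearing the (positive) denominators turns $d_s(i)\ge 0$ into
\[
(4n+2)(n-i)(n-s+i)\ \ge\ (c+1)\bigl((n+1)c-2n\bigr).
\]
The left side is a concave quadratic in $i$, with $i^2$-coefficient $-(4n+2)$, symmetric about $s/2$. So the inequality holds exactly on a central interval, and the sign change goes in the direction you need.

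In the boundary case $\binom{n}{s-i}=0$ with $i\le s/2$, one has $i<s-n$. These indices are the outermost ones, and there
\[
d_s(i)=-\tfrac12\Bigl[\binom{n-1}{i}\binom{n+1}{s-i}+\binom{n+1}{i}\binom{n-1}{s-i}\Bigr]\le 0
\]
automatically. For $s>2n$ the kernel vanishes identically.

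Two small slips: $D(i)$ is a partial sum starting from the outer end $t=0$, not a tail. Also, the center term never enters $D(i)$ for $i\le s/2$.

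As for what each route buys: the citation/TP route is short once the theory is available, and it extends at once to other totally positive kernels and to binomial convolutions $\sum_k\binom{n}{k}x_ky_{n-k}$. Your argument is fully self-contained, needing only Vandermonde and one explicit quadratic inequality, but it is tailored to the binomial kernel.
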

This result is often mentioned by saying that the {\em binomial transform preserves log-concavity} as the expression on the right-hand  side is the binomial transform
of the sequence $z_n$.

In order to be able to use Lemma \ref{bintransform}, we rearrange the expression obtained for $\bona(n,k)$ in Theorem \ref{explicit} so that it represents the
binomial transform of a log-concave sequence. We achieve this as follows.
First, we note that $\frac{1}{n}{n\choose k-1} = \frac{1}{k-1}{n-1\choose k-2}$. This turns the formula from Theorem \ref{explicit} into
\begin{equation} \label{firststep} \bona(n,k)=\frac{1}{k-1} {n -1 \choose k-2}   \sum_{j=0}^{k-1} 2^j {k-1\choose j}{n-k+1\choose j+1} .\end{equation}
Note that after expanding the product above, we can make good use of the identity
\[{n-1\choose k-2}{n-k+1\choose j+1} = {k+j-1\choose k-2}{n-1\choose k+j-1} .\]
Indeed, it is routine to check that the two sides are identical  by simply using the definition of binomial coefficients. The advantage of the format on the right-hand
side is that only one of the factors there depends on $n$. Comparing the last displayed equation with (\ref{firststep}), we see that
\begin{equation} \label{secondstep} \bona(n,k)=\sum_{j=0}^{k-1} a_j {n-1\choose k+j-1},\end{equation}
where \[a_j=\frac{2^{j}}{k-1} {k-1\choose j} {k+j-1\choose k-2}.\]
Note that the $a_j$ depend on $k$ and $j$ only, and not on $n$.

Let us define a sequence $c_m$ so that $c_m=a_{m-(k-1)}$ if $k-1\leq m\leq 2k-2$, and $c_m=0$ otherwise. Then setting $j=m-(k-1)$, we see that (\ref{secondstep}) becomes
\[\bona(n,k)=\sum_{m\geq 0}c_m {n-1\choose m},\] and our proof is complete by Lemma \ref{bintransform} with $n-1$ playing the role of $n$.
Lemma \ref{bintransform} applies, because the sequence of the $c_m$ has no internal zeros. 
Indeed, \(c_m=0\) if \(m<k-1\) or if \(m>2k-2\). On the other hand, if
\(k-1\le m\le 2k-2\), then \(j=m-(k-1)\) satisfies \(0\le j\le k-1\), so
\[
c_m=a_{m-(k-1)}=a_j.
\]
By the definition of \(a_j\),
\[
a_j=\frac{2^j}{k-1}\binom{k-1}{j}\binom{k+j-1}{k-2},
\]
and each factor on the right-hand side is positive. Therefore \(c_m>0\) for
all \(m\) satisfying \(k-1\le m\le 2k-2\). Hence the sequence \((c_m)\) has
no internal zeros.

\section{Further directions} \label{further}
It is well-known that unlabeled binary plane trees are in natural bijection with 231-avoiding permutations of length $n$. On the other hand, {\em decreasing binary trees}, that is, plane binary trees whose vertices are bijectively labeled with the numbers $1,2,\cdots ,n$ so that each vertex has a smaller label than its parent, are in natural bijection with all permutations of length $n$. Therefore, these trees are counted, respectively, by the Catalan numbers and factorials. In both cases, we can refine our counting
argument to focus on trees with a given number of right edges, and then we get, respectively, the Narayana numbers and the Eulerian numbers.  

In this paper, we have shown that 0-1 trees are an intermediate class of trees between those two mentioned in a previous paragraph, and we provided combinatorial 
interpretations for them in terms of permutations. We hope to extend on this in a future paper by finding other natural classes of trees which are in natural bijections
with permutations so that the number of right edges of the trees corresponds to the number of descents of the permutations.

\begin{center} {\bf Acknowledgement}  \end{center}
I am grateful to Per Alexandersson who pointed me in the direction of the three-term recurrence relation proved in Section \ref{rec}, and its alternative proof.

\end{document}